\providecommand{\leftsquigarrow}{%
	\mathrel{\mathpalette\reflect@squig\relax}%
}
\newcommand{\reflect@squig}[2]{%
	\reflectbox{$\m@th#1\rightsquigarrow$}%
}
\newcommand{\svee}{\sigma^\vee}
\newcommand{\A}{\mathbb{A}}
\newcommand{\R}{\mathbb{R}}
\newcommand{\Z}{\mathbb{Z}}
\renewcommand{\P}{\mathbb{P}}
\newcommand{\cJ}{\mathcal{J}}
\newcommand{\cO}{\mathcal{O}}
\newcommand{\cS}{\mathcal{S}}
\newcommand{\cT}{\mathcal{T}}
\newcommand{\cZ}{\mathcal{Z}}
\newcommand{\cX}{\mathcal{X}}
\newcommand{\cY}{\mathcal{Y}}
\newcommand{\fa}{\mathfrak{a}}
\newcommand{\fm}{\mathfrak{m}}
\newcommand{\bA}{\mathbb{A}}
\newcommand{\bQ}{\mathbb{Q}}
\newcommand{\sX}{\mathscr{X}}
\newcommand{\inv}{^{-1}}
\newcommand{\cI}{\mathcal{I}}
\newcommand{\Bl}{\mathrm{Bl}}
\DeclareMathOperator{\Div}{div}
\DeclareMathOperator{\Spec}{Spec}
\DeclareMathOperator{\mult}{mult}
\DeclareMathOperator{\ord}{ord}
\DeclareMathOperator{\Proj}{Proj}
\newcommand{\Newt}{\operatorname{Newt}}
\newcommand{\w}{\omega}
\newcommand{\bft}{\mathbf{t}}
\newcommand{\tX}{\widetilde{X}}
\newcommand{\la}{\lambda}
\newcommand{\rT}{\cT}
\newcommand{\rS}{\cS}
\newcommand{\torus}{T}
\newcommand{\lattice}{L}
\newcommand{\duallattice}{M}
\newcommand{\dualcone}{\sigma^\vee}
\newcommand{\resdiv}{D}
\newcommand{\cOmpletion}{\widehat{\mathcal{O}}}
\newcommand{\basis}[1]{\mathbf{e}_{#1}}
\newcommand{\action}{\cdot}
\newcommand{\defeq}{:=}
\newcommand{\ra}{\rightarrow}
\newcommand{\Gm}{\mathbb{G}_m}
\newcommand{\Jf}{\left[\frac{J}{f}\right]}
\numberwithin{equation}{section}
\newtheorem{prop} {Proposition} [section]
\newtheorem{thm}[prop] {Theorem}
\newtheorem{claim}[prop] {Claim}
\newtheorem{prop-def}[prop]{Proposition-Definition}
\newtheorem{lemma}[prop]{Lemma}
\newtheorem{proposition}[prop]{Proposition}
\newtheorem{thm-defn}[prop]{Theorem-Definition}
\theoremstyle{definition}
\newtheorem{exa}[prop] {Example} 
\newtheorem{defn}[prop]{Definition}
\newtheorem{rmk}[prop]{Remark}
\newtheorem{App}[prop]{Application}
\theoremstyle{remark}
\newtheorem*{ThA*}{\textbf{Theorem A}}
\newtheorem*{ThB*}{\textbf{Theorem B}}
\newtheorem*{ThC*}{Theorem C}
\newtheorem*{ThD*}{Theorem D}
\newtheorem*{ThE*}{Theorem E}
\newtheorem*{Con*}{Conjecture}
\title{Multiplier Modules of extended Rees algebras}
\author{Rahul Ajit}
\address{Department of Mathematics\\
	University of Utah\\
	Salt Lake City, UT 84112, USA.}
\email{rahulajit@math.utah.edu}
\date{\today}
\begin{document}

	\begin{abstract}
		Given a local ring $(R, \fm)$ and an ideal $\fa$ of positive height, we give a way of computing multiplier module $\cJ(\w_T, t^{-\lambda})$ for the extended Rees algebra $T =R[\fa t, t^{-1}]$ for an ideal $\fa$ by proving a decomposition theorem for $\cJ(\w_T, t^{-\lambda})$, (also see \cite{Budur-Mustata-SaitoBS}). We compute the multiplier module $\cJ(\w_{\cS}, (\fa \cdot \cS)^{\lambda})$ for the Rees algebra $S =R[\fa t]$ as well, (also see \cite{HyryBlowUp} and \cite{KotalKummini}). We use these decompositions to understand relationships between associated graded rings, Rees and extended Rees algebras having rational singularities (also see \cite{HWY-F-Regular}).
	\end{abstract}
	
	\maketitle
    \begin{center}
   \textit{To Prof. Karen E. Smith, on the occasion of her 60th birthday!}
\end{center}
	 
	\setcounter{tocdepth}{1}
	
	\tableofcontents
	
	\newpage
	
\section{Introduction}
Let $(R,\fm)$ to be a local ring of dimension $d\geq 2$ and $\mathfrak{a}\subset R$ an ideal with positive height. We will be over a field $k$ of characteristic 0. We set $\cS=R[\mathfrak{a}t]$ to be the Rees algebra of $\mathfrak{a}$ with homogeneous maximal ideal $\fm_\cS =  \fm \oplus \mathfrak{a}t \oplus \mathfrak{a}^2t^2 \oplus \dots$ and $\cT=R[\mathfrak{a}t,t^{-1}]$ the extended Rees algebra of $\mathfrak{a}$ with homogeneous maximal ideal $\fm_\cT = \cdots \oplus Rt^{-2}\oplus Rt^{-1} \fm \oplus \mathfrak{a}t \oplus \mathfrak{a}^2t^2 \oplus \dots$  Let $G = \cT/(t^{-1}) = \bigoplus_{n \geq 0}\fa^n/\fa^{n+1}$ be the associated graded ring. Relationships between various properties (particularly, Cohen-Macaulayness and Gorensteinness) of $\cS, \cT$ and $G$ is a rich and extensively studied subject in commutative algebra and algebraic geometry, see \cite{GotoShimodaCM}, \cite{iai2024characterizationsgorensteinreesalgebras}, \cite{Goto-Nishida-Book}, \cite{Ikeda-Gor}, \cite{VietCM}, \cite{WhenCM}, \cite{WhenGor}, \cite{Huneke-82} and, \cite{Lipman1994CM} for example. Geometrically, Spec $\cT$ corresponds to the deformation to the normal cone Spec $G$, see \cite[Chapter 5]{Fulton}. A common philosophy is that if $G$ has some ring-theoretic property (such as reduced, integral, integrally closed, Cohen-Macaulay etc), so does
$R$, see \cite[Corollary 6.11]{Eisenbud-CA}, \cite{Goto-Nishida-Book}, \cite{Huneke-82}, \cite[Exercise 5.9]{Huneke-Swanson}.

Many authors studied similar questions for rational singularities, for example, \cite{HWY-F-Regular},\cite{HWY-02F-Rational} \cite{HyryCoef} and \cite{LipmanAdjoint}. In particular, Hara, Watanabe, and Yoshida proved, for $\fm$-primary ideal $\fa$ and ($F$-)rational ring R, $\cS$ is ($F$-)rational implies $\cT$ is ($F$-)rational, see \cite{Koley-Kummini21} and \cite{Ajit-Simper-Char-p} for the converse. By deformation, this implies, if  $G$ is ($F$-)rational then so are $\cS$ and, $R$, see \cite[page 182]{HWY-02F-Rational} and Section \ref{DeformationtotheNormalCone}.

\begin{ThA*}(Theorem \ref{Th-equivofFrationality}, cf.\cite{HWY-F-Regular})
 \textit{With the notations introduced above, we have the following:}
    \begin{enumerate}
        \item \textit{$R$ and $\cS$ both having rational singularities implies $\cT$ has rational singularities.}
        \item \textit{$\cT$ has rational singularities implies $R$ and $\cS$ both having rational singularities.}
    \end{enumerate} 
\end{ThA*}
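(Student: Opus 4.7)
The plan is to characterize rational singularities of each of $R$, $\cS$, $\cT$ via the corresponding multiplier module equality $\cJ(\w) = \w$ (valid in characteristic zero, where by Grauert--Riemenschneider this equality automatically entails Cohen--Macaulayness on a normal ring with a canonical module), and then to read off the equalities from one another using the decomposition theorems for $\cJ(\w_\cT, t^{-\lambda})$ and $\cJ(\w_\cS, (\fa \cdot \cS)^{\lambda})$ proved earlier in the paper, specialized at $\lambda = 0$.

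For direction (2), suppose $\cJ(\w_\cT) = \w_\cT$. For $R$: inverting $t^{-1}$ in $\cT$ gives $R[t, t^{-1}]$, a localization of $\cT$, which therefore has rational singularities; since $R \hookrightarrow R[t,t^{-1}]$ is faithfully flat with regular (in fact smooth) fibers, rational singularities descend to $R$. For $\cS$: the decomposition of $\cJ(\w_\cT)$ splits into graded pieces, and the nonnegative-degree part matches up with the graded pieces of $\cJ(\w_\cS)$ (or its natural twist by powers of $\fa \cdot \cS$, which at $\lambda = 0$ collapses to $\cJ(\w_\cS)$). The hypothesis $\cJ(\w_\cT) = \w_\cT$ forces every graded piece to be maximal, so in particular $\cJ(\w_\cS) = \w_\cS$, which gives rational singularities of $\cS$.

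For direction (1), assume $\cJ(\w_R) = \w_R$ and $\cJ(\w_\cS) = \w_\cS$. I would assemble $\cJ(\w_\cT)$ graded-piece by graded-piece from the decomposition theorem. The nonnegative-degree pieces are governed by $\cJ(\w_\cS)$, which is already $\w_\cS$ by hypothesis. The negative-degree pieces are expressed in terms of $\cJ(\w_R, \fa^n)$ (or $\cJ(\w_R, \fa^{\lambda})$ with integer floor $n$), and the rational singularities of $R$ together with the Skoda--type vanishing statement imply $\cJ(\w_R, \fa^n) = \fa^n \w_R$ in the expected range, matching exactly the negative-degree part of $\w_\cT$. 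Summing over all degrees yields $\cJ(\w_\cT) = \w_\cT$, hence $\cT$ has rational singularities.

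The main obstacle will be verifying that the twisted multiplier submodules appearing in the negative-degree part of the decomposition really do reduce to $\fa^n \w_R$ under the sole hypothesis that $R$ has rational singularities; a priori, $\cJ(\w_R) = \w_R$ controls only the untwisted multiplier module, and one needs a Skoda--Briancon--type statement or a direct computation via the decomposition to bridge this gap. A secondary subtlety is keeping careful track of the interaction between the twist $t^{-\lambda}$ and the positive versus negative graded directions, so that the nonnegative pieces genuinely isolate $\cJ(\w_\cS)$ (and its twists) while the negative pieces isolate $\cJ(\w_R, \fa^\bullet)$ without cross-contamination.
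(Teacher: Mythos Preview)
Your overall strategy---specialize the decomposition theorem at $\lambda=0$ and compare $\cJ(\omega)$ with $\omega$ graded piece by piece---is exactly what the paper does. Two points need correction, though.

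First, the claim that $\cJ(\omega_X)=\omega_X$ ``automatically entails Cohen--Macaulayness'' via Grauert--Riemenschneider is not correct. Kempf's criterion says that a normal variety has rational singularities if and only if it is Cohen--Macaulay \emph{and} $\pi_*\omega_Y=\omega_X$; the second condition does not imply the first (Grauert--Riemenschneider only kills the higher $R^i\pi_*\omega_Y$, it does not force the dualizing complex of $X$ to be a sheaf). The paper therefore handles Cohen--Macaulayness as a separate step: for (1) it invokes Huneke's result that $R$ and $\cS$ Cohen--Macaulay force $G$, and hence $\cT$, to be Cohen--Macaulay; for (2) it notes $\cT$ Cohen--Macaulay $\iff G$ Cohen--Macaulay, then uses $\cT$ rational $\Rightarrow \cT_{t^{-1}}=R[t,t^{-1}]$ rational $\Rightarrow R$ pseudo-rational, and finally Lipman's theorem to conclude $\cS$ is Cohen--Macaulay. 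Only after Cohen--Macaulayness is in hand does the paper invoke the criterion $\cJ(\omega)=\omega$.

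Second, the ``main obstacle'' you identify is illusory. At $\lambda=0$ the decomposition reads
\[
\cJ(\omega_\cT)=\bigoplus_{k\in\Z}\cJ(\omega_R,\fa^{k})\,t^k,
\]
and by the paper's convention $\fa^{k}:=R$ for $k\le 0$. Hence every non-positive degree piece is literally $\cJ(\omega_R)$---no twist, no Skoda statement, just $\cJ(\omega_R)=\omega_R$ from $R$ having rational singularities. The pieces in degree $\ge 1$ are identified with $\cJ(\omega_\cS)$ by the ``in particular'' clause of Theorem~B, so rationality of $\cS$ handles those directly. At no point do you need an equality of the form $\cJ(\omega_R,\fa^n)=\fa^n\omega_R$; your worry about ``cross-contamination'' between positive and negative degrees is resolved by the convention on $\fa^k$ for $k\le 0$.
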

We prove this theorem by proving a decompositions of multiplier modules of $\cS$ and $\cT$. 

\begin{ThB*}(Theorem \ref{theorem:multipliermodule-decomp}, cf. \cite{Budur-Mustata-SaitoBS})
\textit{Let $(R,\mathfrak{m})$ be a normal local ring of dimension at least $2$, finite type over a field of characteristic $0$. Let $\fa$ be an $R$-ideal with $\mathrm{ht}(\fa)>0$. Write $\cS = \bigoplus\limits_{n \ge0} \fa^nt^n$ for the Rees algebra of $\fa$ and 
$\cT := \bigoplus\limits_{n \in \Z} \fa^nt^n$ for the extended Rees algebra of $R$ with respect to $\fa$, where, $\fa^n := R$ for $n \leq 0$. Let $\lambda \geq 0$ be any real number.
Then} 
\begin{enumerate}
    \item $\cJ(\omega_{\cS},(\fa\cdot \cS)^\lambda) = \bigoplus\limits_{n\geq 0} \cJ(\omega_R, \fa^{n+1+\lambda})t^{n+1}.$\label{ReesMultiplier}

    \item $\cJ(\omega_{\cT}, (t^{-1})^\lambda) = \bigoplus\limits_{k \in\Z} \cJ (\omega_R, \fa^{k+\lambda})t^k.$ \footnote{With the convention that when $k+  \lambda  \leq 0$, we define $\fa^{k+  \lambda } = R$ }\label{ExtReesMultiplier}
\end{enumerate}

\textit{In particular, $[\cJ(\omega_{\cT}, (t^{-1})^\lambda)]_0 =  \cJ (\omega_R, \fa^{\lambda})$ and $[\cJ(\omega_{\cT}, (t^{-1})^\lambda)]_{\geq 1} = \cJ(\omega_{\cS},(\fa\cdot \cS)^\lambda)$}.
\end{ThB*}

Similar decompositions for the multiplier module of the Rees algebra $\cS$ under more restricted setups appeared in many other works. For example, in \cite[Proposition 3.1]{HyryBlowUp}, Hyry computed a similar formula for multiplier modules $\cJ(\w_S) $ when $R$ is regular and $X = \Proj \cS$ has rational singularities. In \cite[Theorem 5.1]{HaraYoshida}, over a field of positive characteristics, Hara and Yoshida established very similar formula for test modules $\tau(\w_\cS)$ (which measure $F$-rationality) under the assumptions that $R$ is Gorenstein, $\fa$ is $\fm$-primary, and $\cS$ is $F$-rational. Kotal and Kummini generalized \cite{HaraYoshida} and \cite{HyryBlowUp} to the case when $R$ and $\cS$ are normal, Cohen-Macaulay, see \cite[Theorems 1.1 and 1.3]{KotalKummini}.

A similar decomposition for the multiplier ideals of the extended Rees algebra $\cT$ is obtained by Budur, Mustaţă and Saito, under the assumption that $\Spec \ R$ is smooth, using V-filtrations \footnote{After finishing writing this article, Bradley Dirks communicated to me that inspired by Theorem \ref{theorem:multipliermodule-decomp}, he also proved an exact similar decomposition for the multiplier modules of the extended Rees algebra $\cT$ using V-filtrations in his upcoming paper \cite{Brad}.}, see \cite[Theorem 1 along with (1.3.1)]{Budur-Mustata-SaitoBS}.

Similar decompositions for test modules in characteristic $p$ have been proved by Hunter Simper and the author in \cite[Theorem 5.2]{Ajit-Simper-Char-p}, using very different techniques.

\section{Acknowledgments}

I would like to thank my advisors, Christopher Hacon and Karl Schwede, for their constant encouragement, unwavering support, inspiring teachings, and infinite patience. I am grateful to Harold Blum for a lot of stimulating and extremely helpful discussions, which played a crucial role in this paper. I would like to thank Bradley Dirks for informing me about \cite{Budur-Mustata-SaitoBS} and \cite{Brad}. Finally, I would like to thank Daniel Apsley, Harold Blum, Christopher Hacon and Karl Schwede for reading this article and providing many valuable feedback and helpful comments, which improved the exposition substantially. I was partially supported by NSF research grant DMS-2301374 and by a grant from the Simons Foundation SFI-MPS-MOV-00006719-07 while working on this project.

\section{Basic definitions}

\begin{defn} (\cite[Definition 2.1]{WeakOrdanityConj}, \cite{RationalSingPair}, \cite{Blickle})
Suppose that $X$ is a normal variety over a field of characteristic zero, $\Delta$ is an effective $\bQ$-divisor, $\fa$ is an ideal sheaf and $\la \geq 0$ is a real number.
Let $\pi : Y \to X$ be a proper birational morphism with $Y$ normal such that $\fa \cdot \cO_Y = \cO_Y(-G)$ is invertible, and we assume that $K_X$ and $K_Y$ agree wherever $\pi$ is an isomorphism.
\begin{itemize}
\item[(a)]  If $K_X + \Delta$ is $\bQ$-Cartier, we assume that $\pi$ is a log resolution of $(X, K_X + \Delta, \fa)$.  Then we define the \textbf{multiplier ideal} to be
\[
\hspace*{2em} \cJ(X, \Delta, \fa^\la) = \pi_* \cO_Y(\lceil K_Y - \pi^*(K_X + \Delta) - \la G\rceil) \subseteq \cO_X.
\]
\item[(b)]  If $\Delta$ is $\bQ$-Cartier, we assume that $\pi$ is a log resolution of $(X, \Delta, \fa)$.  Then we define the \textbf{multiplier module} to be
\[
\cJ(\omega_X, \Delta, \fa^\la) = \pi_* \cO_Y(\lceil K_Y - \pi^*\Delta  - \la G\rceil) \subseteq \omega_X.
\]
\end{itemize}
When $\Delta = 0$, we omit it, i.e, we write $\cJ(\omega_X, \fa^\la) := \cJ(\omega_X, 0, \fa^\la)$
\end{defn}
It is a general fact that these definitions are independent of the (log) resolution chosen, see \cite[Theorem 9.2.18]{Lazarsfeld-Positivity-2}.

\begin{defn}
Let $(X, \fa^\la)$ be a pair and let $\pi : Y \rightarrow X$ with $\fa \cO_{Y} = \cO_{Y}(-G)$ be a log resolution of $\fa$.  We say that the pair $(X, \fa^\la)$ has \textbf{rational singularities}  if the natural map $\cO_X \rightarrow \mathbf{R} \pi_* \cO_{Y}(\lfloor \la G \rfloor)$ is a quasi-isomorphism.

A reduced Noetherian ring $R$ has \textbf{rational singularities} if $\operatorname{Spec}R$ has rational singularities.
\end{defn}

\begin{lemma}\cite[Lemma 2.3]{WeakOrdanityConj}
\label{PropertiesOfMultiplier}
With notation as above:
\begin{itemize}
\item[(i)]  $\cJ(X, \Delta, \fa^\la) = \cJ(\omega_X, K_X + \Delta, \fa^\la)$.
\item[(ii)]  If $D$ is a Cartier divisor, then
\[
\cJ(X, \Delta +D, \fa^\la) = \cJ(X, \Delta, \fa^\la) \otimes \cO_X(-D).
\]
\end{itemize}
\end{lemma}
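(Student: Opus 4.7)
The plan is to unwind each statement directly from the definitions, after picking a log resolution that simultaneously handles all the pairs in sight.

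For part (i), choose $\pi \colon Y \to X$ to be a log resolution of $(X, K_X + \Delta, \fa)$. Plugging $K_X + \Delta$ into definition (b) of the multiplier module gives
\[
\cJ(\omega_X, K_X + \Delta, \fa^\la) = \pi_* \cO_Y\bigl(\lceil K_Y - \pi^*(K_X + \Delta) - \la G \rceil\bigr),
\]
which is verbatim the expression defining $\cJ(X, \Delta, \fa^\la)$ from definition (a). The only subtlety is that the two are a priori subsheaves of different ambient objects ($\omega_X$ versus $\cO_X$); to read the equality, one chooses representatives $K_X$ and $K_Y$ compatibly, so that $K_Y$ agrees with $\pi^* K_X$ away from the exceptional locus of $\pi$ and $\pi_* \cO_Y(K_Y) = \omega_X$. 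Once this compatibility is fixed, the identical formulas on both sides manifestly identify the two sheaves.

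For part (ii), pick $\pi$ so that it simultaneously resolves $(X, \Delta + D, \fa)$ and $(X, \Delta, \fa)$; this is possible by blowing up further, and by independence of the resolution choice it suffices to verify the formula on any such $\pi$. The decisive input is that $D$ Cartier forces $\pi^* D$ to be an honest integral Cartier divisor on $Y$, so it commutes with rounding:
\[
\lceil K_Y - \pi^*(K_X + \Delta + D) - \la G \rceil = \lceil K_Y - \pi^*(K_X + \Delta) - \la G \rceil - \pi^* D.
\]
Taking $\cO_Y(\cdot)$ turns this subtraction into tensoring by $\cO_Y(-\pi^*D) = \pi^* \cO_X(-D)$, and then the projection formula $\pi_*(\cF \otimes \pi^*\cO_X(-D)) = \pi_*\cF \otimes \cO_X(-D)$ yields the asserted identity
\[
\cJ(X, \Delta + D, \fa^\la) = \cJ(X, \Delta, \fa^\la) \otimes \cO_X(-D).
\]

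The entire argument is essentially formal, and the main point of caution is bookkeeping: tracking which $\Q$-Cartier hypothesis is needed at each step, interpreting $K_Y - \pi^* K_X$ through compatible choices of canonical divisors in (i), and verifying in (ii) that the round-up truly unchanges after subtracting the integral divisor $\pi^* D$ (which is where Cartierness of $D$ is used crucially — without it $\pi^*D$ would have fractional part that mingles with the rounding of the rest). There is no substantive geometric obstacle beyond ensuring the log resolution is rich enough.
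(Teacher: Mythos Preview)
Your proof is correct and is the standard argument. Note that the paper does not actually give its own proof of this lemma: it is stated with a citation to \cite[Lemma 2.3]{WeakOrdanityConj} and no proof follows. Your unwinding of the definitions---matching the formulas literally in (i) after fixing compatible canonical divisors, and using integrality of $\pi^*D$ plus the projection formula in (ii)---is exactly how this is established in the reference.
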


\begin{rmk}
    Suppose that $X$ is a reduced equidimensional Cohen-Macaulay scheme and $\fa$ is an ideal sheaf on $X$.  Then $(X, \fa^\la)$ has rational singularities if and only if the multiplier submodule $\cJ(\omega_X, \fa^\la)$ is equal to $\omega_X$, see \cite[Corollary 3.8]{RationalSingPair}.
\end{rmk}

\subsection{Grothendieck's natural construction}\label{GrothNatural} We will follow \cite[Remark 2.8]{Hyry-Villamayor}, \cite[Section 6.2.1]{HyrySmith} and, \cite[Section 4]{AjitGR-solo} closely. Let $(R, \fm)$ be a local ring, and let $\cI \subset R$ be an ideal. Consider the blow-up 
\[
f: X = \Proj \ S =\Proj\left(\bigoplus_{n \geq 0} \cI^n\right) \longrightarrow \Spec R
\] 
along $\cI$.Consider the graded $S$-algebra defined by:
\[
S^\# := S \oplus S_{\geq 1} \oplus S_{\geq 2} \oplus \cdots,
\]
where $S_{\geq n} = \bigoplus_{m \geq n} \cI^m$ for each integer $m \geq 0$. Define the scheme
\[
Y := \Proj(S^\#).
\]

The scheme $Y$ admits two geometric interpretations:

\begin{enumerate}
\item \textbf{As a blow-up:} The algebra $S^\#$ is naturally isomorphic to the Rees algebra of the ideal $S_{\geq 1} \subset S$. So, the natural projection
\[
\theta: Y \longrightarrow \Spec S
\]
is the blow-up of $\Spec S$ along $S_{\geq 1}$.

\item \textbf{Total space of a line bundle:} There exists a canonical isomorphism of $X$-schemes
\[
Y \cong \Spec_X \left( \bigoplus_{m \geq 0} \cO_X(m) \right).
\]
This isomorphism identifies $Y$ with the total space of the line bundle $\cO_X(-1)^\vee$, which is the dual of the tautological line bundle associated with the blow-up $X = \Proj S$ and $\cO_X(1) \simeq \cI \cdot \cO_X$.  Let
\[
\eta: Y \longrightarrow X
\]
denote the corresponding bundle map.
\end{enumerate}

We have the following commutative diagram
\[
\begin{tikzcd}
Y \arrow[r, "\theta"] \arrow[d, "\eta"'] & \Spec S \arrow[d, "\pi"] \\
X \arrow[r, "f"'] & \Spec R
\end{tikzcd}
\]
where $\pi: \Spec S \to \Spec R$ is the natural affine morphism induced by the inclusion $R \hookrightarrow S$. The morphism $\eta: Y \to X$ is smooth of relative dimension 1. The relative cotangent bundle $\Omega_{Y/X} \cong \eta^* \cO_X(1)$. So, we have
\[
\omega_Y \cong \eta^* \omega_X \otimes \Omega_{Y/X} \cong \eta^* \omega_X \otimes \eta^* \cO_X(1) \cong \eta^*(\omega_X(1)).
\]
Since $\eta$ is affine
\[
\eta_* \omega_Y \cong \eta_* \left( \eta^* (\omega_X(1)) \right) \cong \omega_X(1) \otimes \eta_* \cO_Y.
\]

As $Y$ is the total space of $\cO_X(-1)$, we have
\[
\eta_* \cO_Y \cong \bigoplus_{m \geq 0} \cO_X(m).
\]
Therefore
\[
\eta_* \omega_Y \cong \bigoplus_{m \geq 0} \omega_X(1) \otimes \cO_X(m) \cong \bigoplus_{m \geq 0} \omega_X(m+1) \cong \bigoplus_{m \geq 0} \cI^{m+1} \omega_X. 
\]
As $\eta$ is affine,
\[
H^i(Y, \omega_Y) \cong H^i(X, \eta_* \omega_Y) \cong \bigoplus_{m \geq 0} H^i(X, \cI^{m+1} \omega_X) \quad \text{for all } i \geq 0.
\]
.

\section{Deformation to the normal cone}\label{DeformationtotheNormalCone}

The classical reference for this section is \cite[Chapter 5]{Fulton}. However, we will closely follow \cite[13.4.1]{Eisenbud-3264} and Vakil's Intersection Theory course notes \cite[Section 3]{DNCVakil}.
Let $(R, \fm)$ be a local ring and $I$ be an ideal of $R$. Define $\cT = \bigoplus_{n \in \mathbb{Z}} I^n t^n \subset R[t, t^{-1}]$, with $I^{n} = R$ for $n \le 0$ to be the extended Rees Algebra. Note that $\cT$ is a $k[t^{-1}]$-algebra. Let us start by recording the following easy facts.

\begin{lemma} Under the above setup, the following holds:
    \begin{enumerate}
    \item (\cite[6.5]{Eisenbud-CA}) $\cT/(t^{-1}) \cong \bigoplus_{n \geq 0} I^n / I^{n+1} =  G$, the associated graded ring.
    \item \cite[6.5]{Eisenbud-CA} $\cT/(t^{-1} - a) \cong R$ for any unit $a \in R^\times$.
    \item $\cT_{t^{-1}} \cong R[t, t^{-1}]$ 
    \item (\cite[Corollary 6.11]{Eisenbud-CA}) $\cT$ is flat over $k[t^{-1}]$.
\end{enumerate}
\end{lemma}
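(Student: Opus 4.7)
My plan is to treat each of the four assertions directly, working with the graded decomposition $\cT = \bigoplus_{n \in \Z} I^n t^n$ and the natural inclusion $\cT \subseteq R[t, t^{-1}]$ as a graded subring. For (1), I would compute the homogeneous ideal $(t^{-1}) \subseteq \cT$ piece by piece: multiplication by $t^{-1}$ sends $I^{n+1} t^{n+1}$ onto $I^{n+1} t^n \subseteq I^n t^n$ (with the convention $I^n = R$ for $n \le 0$), so the $n$-th graded piece of $\cT/(t^{-1})$ is $I^n/I^{n+1}$, which vanishes for $n < 0$ and recovers $G = \bigoplus_{n \ge 0} I^n/I^{n+1}$.

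For (3), I would verify both inclusions inside $R[t, t^{-1}]$. One direction is automatic from $\cT \subseteq R[t, t^{-1}]$. For the other, given $r \in R$ and $n \ge 0$, choose $k \ge n$, so that $r t^{n-k} \in R\, t^{n-k} \subseteq \cT$ (since the graded piece of $\cT$ in nonpositive degree is all of $R\, t^{n-k}$); then $r t^n = (r t^{n-k})/(t^{-1})^k \in \cT_{t^{-1}}$, showing $R[t] \subseteq \cT_{t^{-1}}$, and $t^{-1}$ is already a unit.

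Assertion (2) then follows by restriction: the evaluation $\phi : R[t, t^{-1}] \twoheadrightarrow R$, $t \mapsto a^{-1}$, restricts to a surjection $\phi|_\cT : \cT \twoheadrightarrow R$ (surjective because $R \subseteq \cT$) whose kernel contains $(t^{-1}-a)\cT$. To upgrade containment to equality, I would check that the composite $R \hookrightarrow \cT \twoheadrightarrow \cT/(t^{-1}-a)\cT$ is a bijection: surjectivity is immediate from the previous sentence, and injectivity follows because any relation $r = (t^{-1}-a) g$ inside $\cT$ also holds in $R[t, t^{-1}]$, whence applying $\phi$ gives $r = 0$.

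Finally, for (4), since $k[t^{-1}]$ is a PID, flatness reduces to torsion-freeness. I would use $\cT \hookrightarrow R[t, t^{-1}]$ together with the observation that $R[t, t^{-1}]$ is the localization of $R[t^{-1}] \cong R \otimes_k k[t^{-1}]$ at $t^{-1}$, hence flat, and in particular torsion-free, over $k[t^{-1}]$; a $k[t^{-1}]$-submodule of a torsion-free module remains torsion-free, so $\cT$ itself is flat. None of the steps presents a genuine obstacle, since this lemma is essentially bookkeeping; the only mildly delicate point is matching $\ker \phi|_\cT$ with $(t^{-1}-a)\cT$ in (2), i.e.\ showing that the contraction of the principal ideal $(t^{-1}-a) R[t, t^{-1}]$ to $\cT$ is already generated inside $\cT$ by $t^{-1}-a$.
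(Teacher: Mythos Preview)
The paper does not prove this lemma at all; it simply records the four assertions with citations to Eisenbud's \textit{Commutative Algebra} and moves on. Your arguments for (1), (3), and (4) are correct and are the standard ones.

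For (2) your sketch has a small circularity. You propose to show that $R \hookrightarrow \cT \twoheadrightarrow \cT/(t^{-1}-a)\cT$ is a bijection and claim that ``surjectivity is immediate from the previous sentence''. But the previous sentence only establishes that $\phi|_\cT:\cT\to R$ is surjective and that $(t^{-1}-a)\cT\subseteq\ker\phi|_\cT$; neither of these gives surjectivity of $R\to\cT/(t^{-1}-a)\cT$. In fact that surjectivity is exactly the ``mildly delicate point'' you flag in your last paragraph: once $R\to\cT/(t^{-1}-a)\cT$ is bijective, the factorization $\cT\to\cT/(t^{-1}-a)\cT\to R$ forces $\ker\phi|_\cT=(t^{-1}-a)\cT$, so you cannot defer it. The missing step is short: for $n>0$ and $x\in I^n$ write
\[
t^{-n}-a^n=(t^{-1}-a)\sum_{i=0}^{n-1}t^{-i}a^{\,n-1-i}\in\cT,
\]
multiply by $xt^n$, and note that each $xt^{n-i}\in I^n t^{\,n-i}\subseteq I^{n-i}t^{\,n-i}\subseteq\cT$; this gives $x-a^n xt^n\in(t^{-1}-a)\cT$, i.e.\ $xt^n\equiv a^{-n}x\in R$. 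The case $n<0$ is immediate since then $x(t^{n}-a^{-n})\in(t^{-1}-a)\cT$ directly. Hence every homogeneous element of $\cT$ is congruent modulo $(t^{-1}-a)$ to an element of $R$, which is the surjectivity you need.
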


\subsection{Affine deformation space}
For $X = \Spec R/I \subseteq Y = \Spec R$, define:
\[
\mathscr{M} := \Spec \ \cT, \quad \tau: \mathscr{M} \to \mathbb{A}^1 = \Spec k[t^{-1}]
\]

\begin{lemma}
    \begin{enumerate}
    \item \textit{General fiber at $t^{-1} = a \neq 0$}, $a \in k^*$: 
    \[
    \tau^{-1}(a)  \cong \Spec \ R
    \]
    \item \textit{Special fiber ($t^{-1} = 0$)}:
    \[
    \tau^{-1}(0)  \cong \Spec \ G =: C_XY
    \]
\end{enumerate}
\end{lemma}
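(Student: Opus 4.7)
The plan is to identify the scheme-theoretic fiber of $\tau : \mathscr{M} \to \mathbb{A}^1$ over a closed point $t^{-1} = a$ with $\Spec$ of the quotient $\cT/(t^{-1} - a)$, and then invoke the previous lemma to recognize these quotients as either $R$ or $G$. Explicitly, since $\mathbb{A}^1 = \Spec k[t^{-1}]$ and $\mathscr{M} = \Spec \cT$, the fiber over a $k$-point $a \in k$ is
\[
\tau^{-1}(a) = \Spec\bigl(\cT \otimes_{k[t^{-1}]} k[t^{-1}]/(t^{-1} - a)\bigr) = \Spec\bigl(\cT/(t^{-1} - a)\cT\bigr),
\]
so the entire content of the lemma is to evaluate this quotient at $a$ a unit and at $a = 0$.

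For the general fiber, I would take $a \in k^\times$ and apply part (2) of the preceding lemma, which gives $\cT/(t^{-1} - a) \cong R$, so $\tau^{-1}(a) \cong \Spec R$. Strictly speaking that lemma is stated for units $a \in R^\times$, but $k^\times \subseteq R^\times$ since $(R, \fm)$ is local with residue field containing $k$, so the statement applies. For the special fiber, I would take $a = 0$ and apply part (1) of the preceding lemma, which gives $\cT/(t^{-1}) \cong G = \bigoplus_{n \geq 0} I^n/I^{n+1}$, so $\tau^{-1}(0) \cong \Spec G$, which is by definition the normal cone $C_X Y$.

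There is essentially no obstacle here: the lemma is a direct bookkeeping consequence of the ring-theoretic isomorphisms already recorded, together with the fact that $\tau$ is the morphism induced by the structural inclusion $k[t^{-1}] \hookrightarrow \cT$. The only thing worth flagging explicitly is flatness of $\tau$ (part (4) of the previous lemma), which justifies calling $\mathscr{M} \to \A^1$ a \emph{deformation} in the first place and ensures that these fibers are the geometrically meaningful degenerations from $\Spec R$ to the normal cone $C_X Y$.
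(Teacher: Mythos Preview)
Your argument is correct and matches the paper's approach: the paper does not give a separate proof of this lemma at all, treating it as an immediate reformulation of the preceding ring-theoretic isomorphisms $\cT/(t^{-1}-a)\cong R$ and $\cT/(t^{-1})\cong G$. Your write-up simply makes explicit the identification $\tau^{-1}(a)=\Spec(\cT/(t^{-1}-a))$ that the paper leaves implicit.
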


The above says that $G$ can be thought of as the special fiber of $\cT$, whereas a general fiber is $R$. This has very important consequence: if $G$ has some ring-theoretic property (such as reduced, integral, integrally closed, Cohen-Macaulay etc), so does
$R$, see \cite[Corollary 6.11]{Eisenbud-CA}, \cite{Goto-Nishida-Book}, \cite{Huneke-82}, \cite[Exercise 5.9]{Huneke-Swanson}.

\subsection{Global construction} (See \cite[13.4.1]{Eisenbud-3264}, \cite[Section 4]{DNCVakil} for details)
Blow-up $Y \times \P^1$ at $X \times \{0\}$ and define $\mathscr{M}_{\P^1} := \Bl_{X \times \{0\}} (Y \times \P^1)$ with projection $\tau: \mathscr{M}_{\P^1} \to \P^1$.

\begin{lemma} (see \cite[13.4.1]{Eisenbud-3264}, \cite[Page 4]{DNCVakil})\label{ExceptionalDNC}
    The $\tau$-exceptional divisor satisfies $E_{X \times \{0\}}Y\times \P^1 \cong \P (C_XY \oplus \mathbf{1})$, where the later is the projective compactification of the total space of the normal bundle $C_XY$.
\end{lemma}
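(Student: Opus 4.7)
The plan is to invoke the standard formula identifying the exceptional divisor of a blow-up with the projectivization of the normal cone: for any closed subscheme $Z\hookrightarrow W$ with ideal sheaf $\cJ$, one has
\[
E_Z W \;=\; \Proj_Z\Bigl(\bigoplus_{n\ge 0}\cJ^n/\cJ^{n+1}\Bigr) \;=\; \P(C_ZW).
\]
Applied to $Z=X\times\{0\}$ and $W=Y\times\P^1$, it therefore suffices to identify the normal cone $C_{X\times\{0\}}(Y\times\P^1)$ with the total space of $C_XY\oplus\mathbf{1}$ over $X$, where $\mathbf{1}=\cO_X$ denotes the trivial line bundle.

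I would reduce to an affine chart of $\P^1$ containing $0$, on which $\{0\}$ is cut out by a regular parameter $t$. Locally then, the ideal of $X\times\{0\}$ in $Y\times\P^1$ is $I+(t)\subset R[t]$, where $I\subset R$ is the ideal of $X$ in $Y$. The central computation is the description of $\gr_{I+(t)}R[t]$. Using the grading $R[t]=\bigoplus_{l\ge 0}Rt^l$ together with the fact that $t$ is a non-zero-divisor on $R[t]$, one computes
\[
(I+(t))^n \;=\; \sum_{k+l=n}I^k (t)^l \;=\; \bigoplus_{l\ge 0}I^{\max(0,\,n-l)}\,t^l,
\]
and consequently
\[
\frac{(I+(t))^n}{(I+(t))^{n+1}} \;\cong\; \bigoplus_{k+l=n}\frac{I^k}{I^{k+1}}\,t^l.
\]
Summing over $n$ identifies $\gr_{I+(t)}R[t]$ with $G\otimes_{R/I}(R/I)[t]$; taking $\Spec$ over $\Spec(R/I)$ produces $C_XY\times_X\A^1_X$. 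Globalizing this identification over $X$ and then projectivizing yields
\[
E_{X\times\{0\}}(Y\times\P^1)\;\cong\;\P(C_XY\oplus \mathbf{1}),
\]
as claimed.

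The main obstacle is the calculation of $\gr_{I+(t)}R[t]$. The subtlety is that $X\subset Y$ need not be a regular embedding, so $C_XY$ can be non-flat or singular and no product formula for normal bundles is available. What rescues the computation is that the second factor $\{0\}\subset\P^1$ is cut out by the non-zero-divisor $t$, which is precisely what decouples the contributions of the two ideals and makes the degree-by-degree splitting work cleanly.
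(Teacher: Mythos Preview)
Your argument is correct and is precisely the standard one: identify the exceptional divisor with $\Proj$ of the conormal algebra, compute $\gr_{I+(t)}R[t]\cong(\gr_I R)[z]$ with $z$ in degree~$1$ using that $t$ is a non-zero-divisor, and recognize this as the graded algebra of $C_XY\oplus\mathbf{1}$. The paper does not supply its own proof of this lemma; it simply cites \cite[13.4.1]{Eisenbud-3264} and \cite[p.~4]{DNCVakil}, where exactly this computation appears. One minor point of phrasing: when you write ``taking $\Spec$ over $\Spec(R/I)$ produces $C_XY\times_X\A^1_X$'' and then ``projectivizing,'' make explicit that the isomorphism $\gr_{I+(t)}R[t]\cong G[t]$ is \emph{graded} with $\deg t=1$, since it is this grading (not the underlying scheme) that identifies $\Proj$ with $\P(C_XY\oplus\mathbf{1})$ rather than with some other projectivization.
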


\begin{lemma} (\cite[13.4.1]{Eisenbud-3264},\cite[3.1]{DNCVakil})
    The scheme-theoretic fiber $\tau^{-1}(0)$ consists of two irreducible components: the proper transform $\tX\cong \Bl_X Y$ of $X \times \{0\}$ and the exceptional divisor $E_{X \times \{0\}}Y\times \P^1 \cong \P (C_XY \oplus \mathbf{1})$, glued along the ``hyperplane at infinity" $\mathbb{P}(C_XY)$. That is, $\tau^{-1}(0) \cong \Bl_X Y \bigcup_{\mathbb{P}(C_XY)} \mathbb{P}(C_XY \oplus \cO_X)$ .
\end{lemma}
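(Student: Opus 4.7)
The plan is to identify $\tau^{-1}(0)$ with the scheme-theoretic pullback of the Cartier divisor $Y \times \{0\}$ under the blow-up $\pi\colon \mathscr{M}_{\P^1} \to Y\times \P^1$, apply the standard decomposition of such a pullback into proper transform plus exceptional divisor, and then identify each of the two pieces with the stated geometric objects.

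Since $\tau = \pr_2 \circ \pi$, one has $\tau^{-1}(0) = \pi^*(Y\times\{0\})$ as schemes. Set $D := Y\times\{0\}$ and $W := X\times\{0\}$. Locally in an affine chart $\Spec R[t]\subset Y\times \P^1$, the divisor $D$ is cut out by $t$ and the center $W$ by $I + (t)$, where $I$ is the ideal of $X$ in $Y$. The element $t$ is part of a minimal generating set of $I+(t)$ (it is regular in $R[t]$ and not contained in $(I+(t))^2$ since it contributes a distinct direction coming from $\P^1$), so the standard formula for pulling a Cartier divisor through a blow-up whose center it contains yields
\[
\pi^* D \;=\; \tilde D + E,
\]
with both summands appearing with multiplicity one, where $\tilde D$ is the proper transform of $D$ and $E$ is the $\pi$-exceptional divisor. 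A chart-level check confirms this: on the chart where $t$ generates $(I+(t))\cdot \cO_{\mathscr{M}_{\P^1}}$ one has $\tilde D = \emptyset$ and $\pi^* D = E$, while on a chart where some $f\in I$ generates, writing $t = fh$ gives $\pi^* D = V(f)+V(h) = E + \tilde D$.

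Now identify the two components. Because $\pi$ is an isomorphism away from $W$, the proper transform $\tilde D$ is the closure in $\mathscr M_{\P^1}$ of $(Y\times\{0\})\setminus (X\times\{0\})\cong Y\setminus X$; the universal property of blow-ups identifies this closure canonically with $\Bl_X Y$. Lemma~\ref{ExceptionalDNC} identifies $E$ with $\P(C_X Y \oplus \mathbf{1})$.

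Finally, pin down the gluing locus $\tilde D\cap E$. The restriction of $\pi$ to $\tilde D$ is the blow-up $\Bl_X Y \to Y$, whose own exceptional divisor is $\P(C_X Y)$; this subscheme is exactly $\tilde D\cap E$. Inside $E$, the direct-sum decomposition $C_{X\times\{0\}}(Y\times \P^1)\cong C_XY\oplus \mathbf{1}$, coming from the transverse splitting of $I+(t)$ on $X$, realizes $\P(C_X Y)\subset \P(C_X Y\oplus \mathbf{1})$ as the hyperplane at infinity. The two copies of $\P(C_X Y)$ then coincide as subschemes, giving the asserted pushout description. The main subtlety I expect is verifying that this matching is genuinely scheme-theoretic and not merely set-theoretic; this reduces to the transversality of $V(I)$ and $V(t)$ inside $Y \times \P^1$ together with the multiplicity-one computation of the second paragraph.
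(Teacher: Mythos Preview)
The paper does not supply its own proof of this lemma; it simply records the statement and defers to \cite[13.4.1]{Eisenbud-3264} and \cite[3.1]{DNCVakil}. Your proposal is a correct reconstruction of the standard argument found in those references: decompose $\pi^*(Y\times\{0\})$ as proper transform plus exceptional divisor (with multiplicity one, since $t$ contributes an independent conormal direction), identify the proper transform with $\Bl_X Y$ via the universal property, invoke the previous lemma for $E$, and match the two copies of $\P(C_XY)$ along the intersection. There is nothing to compare against in the paper itself, and your write-up is sound as it stands.
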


\subsection{Deformation to the normal cone}
Define
\[
\mathscr{M}^\circ := \mathscr{M}_{\mathbb{P}^1} \setminus \Bl_X Y
\]

\begin{proposition} (\cite[Theorem 13.8]{Eisenbud-3264}, \cite[Section 4]{DNCVakil})The morphism $\tau': \mathscr{M}^\circ \to \mathbb{P}^1$ is flat and

\begin{enumerate}
    \item $\tau'^{-1}(t) \cong Y$ for $t \neq 0$,
    \item $\tau'^{-1}(0) = \mathbb{P}(C_XY \oplus \cO_X) \setminus \mathbb{P}(C_XY) \cong C_XY$
  
\end{enumerate}
\end{proposition}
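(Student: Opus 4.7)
The plan is to handle the three assertions in order: the two fiber identifications first (both essentially formal once one invokes the preceding lemmas), and flatness last. The running inputs are Lemma \ref{ExceptionalDNC}, the subsequent lemma decomposing $\tau^{-1}(0) \cong \Bl_X Y \cup_{\mathbb{P}(C_X Y)} \mathbb{P}(C_X Y \oplus \mathbf{1})$, and the first lemma of Section \ref{DeformationtotheNormalCone}, which records that $\cT$ is flat over $k[t^{-1}]$.

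For item (1), I would observe that the blow-up $\mathscr{M}_{\mathbb{P}^1} \to Y \times \mathbb{P}^1$ is centered at $X \times \{0\}$, which is supported over $t = 0$. Hence the blow-up restricts to an isomorphism over $\mathbb{P}^1 \setminus \{0\}$, and the proper transform $\Bl_X Y$ lies entirely inside $\tau^{-1}(0)$. Removing $\Bl_X Y$ therefore affects nothing over $\mathbb{P}^1 \setminus \{0\}$, so $(\tau')^{-1}(t) \cong Y$ for every $t \neq 0$.

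For item (2), I would invoke the decomposition of $\tau^{-1}(0)$ into $\Bl_X Y$ and $\mathbb{P}(C_X Y \oplus \mathbf{1})$, glued along $\mathbb{P}(C_X Y)$. Since the gluing locus $\mathbb{P}(C_X Y)$ is contained in $\Bl_X Y$ (it is the scheme-theoretic intersection of the two components), removing $\Bl_X Y$ from $\mathscr{M}_{\mathbb{P}^1}$ leaves, over $0$, exactly the open complement $\mathbb{P}(C_X Y \oplus \mathbf{1}) \setminus \mathbb{P}(C_X Y)$. This complement is the total space of the normal cone inside its projective completion (with $\mathbb{P}(C_X Y)$ playing the role of the hyperplane at infinity), so it is canonically identified with $C_X Y = \Spec G$.

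For flatness, the idea is to cover $\mathscr{M}^\circ$ by two opens whose flatness over $\mathbb{P}^1$ is already known. The first is the affine deformation chart $\mathscr{M} = \Spec \cT \subset \mathscr{M}^\circ$ mapping to $\mathbb{A}^1 = \Spec k[t^{-1}] \subset \mathbb{P}^1$; flatness here is exactly the earlier statement that $\cT$ is flat over $k[t^{-1}]$. The second is $(\tau')^{-1}(\mathbb{P}^1 \setminus \{0\}) \cong Y \times (\mathbb{P}^1 \setminus \{0\})$ from item (1), which is flat over $\mathbb{P}^1 \setminus \{0\}$ by the product structure. Since flatness of a morphism can be checked at each point of the source and the two opens cover $\mathscr{M}^\circ$, flatness of $\tau'$ follows. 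The main subtlety I expect lies in item (2), namely verifying cleanly that (a) the scheme-theoretic intersection of the two components of $\tau^{-1}(0)$ is precisely $\mathbb{P}(C_X Y)$, and (b) the complement $\mathbb{P}(C_X Y \oplus \mathbf{1}) \setminus \mathbb{P}(C_X Y)$ is canonically the total space of $C_X Y$; both are standard facts about projective compactifications of cones, but making the identifications bookkeeping-clean is where most of the real content of the argument sits.
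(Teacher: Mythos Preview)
The paper does not supply its own proof of this proposition; it is stated with citations to \cite[Theorem 13.8]{Eisenbud-3264} and \cite[Section 4]{DNCVakil} and left at that. Your proposal is correct and follows the standard outline one finds in those references: items (1) and (2) are read off directly from the preceding lemma decomposing $\tau^{-1}(0)$, and flatness is checked on the two-open cover consisting of the affine chart $\mathscr{M} = \Spec \cT$ (flat over $\A^1$ by the first lemma of the section) together with $(\tau')^{-1}(\P^1 \setminus \{0\}) \cong Y \times (\P^1 \setminus \{0\})$. The one point you leave implicit, namely that $(\tau')^{-1}(\A^1)$ really is $\Spec \cT$, is exactly the identification the paper records later in its Setup subsection (that the extended Rees construction is $\Bl_{(\fa,t^{-1})}(X \times \A^1)$ with the strict transform of $V(t^{-1})$ removed), so it is available and your argument goes through.
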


\section{Main steps of the proof}

Let $(R,\fm)$ be a local ring of dimension $d\geq 2$ and $\mathfrak{a}\subset R$ an ideal with positive height and $\cT=R[\mathfrak{a}t,t^{-1}]$ the extended Rees algebra of $\mathfrak{a}$ with homogeneous maximal ideal $\fm_T = \cdots \oplus Rt^{-2}\oplus Rt^{-1} \fm \oplus \mathfrak{a}t \oplus \mathfrak{a}^2t^2 \oplus \dots$ . Here we explain the main steps of our multiplier module formula for the pair $(\cT, t^{-\lambda})$. We start with the diagram:

\[\begin{tikzcd}
	{\ \sX=\Spec_X\left(\bigoplus\limits_{n\in\Z}\fa^n\cO_X t^n\right)}       & X \\
	{\Spec T} & {\Spec R}
	\arrow["{\pi}", from=1-1, to=1-2]
	\arrow["\beta"', from=1-1, to=2-1]
	\arrow["\psi", from=1-2, to=2-2]
	\arrow[from=2-1, to=2-2]
    \end{tikzcd}\]
where $\psi : X \longrightarrow \Spec \ R $ is a log resolution of singularity of $(\Spec \ R, \fa).$ All other maps are defined naturally.

\begin{lemma}
The scheme $\mathscr{X}$ is Cohen-Macaulay.
\end{lemma}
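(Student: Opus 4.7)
The plan is to check Cohen-Macaulayness locally on $X$ and to reduce on affine patches to the observation that the extended Rees algebra of a principal ideal generated by a nonzerodivisor is a hypersurface in a regular ring. Since $\psi : X \to \Spec R$ is a log resolution of $(\Spec R, \fa)$, the ideal sheaf $\fa \cdot \cO_X$ is invertible on the smooth scheme $X$. Cover $X$ by affines $U = \Spec A$ chosen small enough that $A$ is a regular domain and $\fa \cdot A = (f)$ for a single generator $f \in A$ which is a nonzerodivisor. On such a $U$,
\[
\sX|_U \;=\; \Spec\Bigl(\bigoplus_{n \in \Z} \fa^n A \cdot t^n\Bigr) \;=\; \Spec A[ft,\, t^{-1}].
\]

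Next I would establish the standard local presentation
\[
A[u,v]/(uv - f) \;\xrightarrow{\;\cong\;}\; A[ft,\, t^{-1}], \qquad u \mapsto ft,\quad v \mapsto t^{-1}.
\]
Surjectivity is immediate. For injectivity, the relation $uv \equiv f$ allows one to reduce any class in $A[u,v]/(uv-f)$ to the form $g(v) + u\, h(u)$ with $g \in A[v]$ and $h \in A[u]$; evaluating the image in the free $A$-module $A[t, t^{-1}] = \bigoplus_{n\in\Z} A \cdot t^n$ and equating the coefficient of each $t^n$ forces $g = 0$ (from the negative and constant coefficients) and $a_i f^i = 0$ for each coefficient $a_i$ of $h$, which gives $h = 0$ since $f$ is a nonzerodivisor.

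With this local description in hand, $\sX|_U$ is cut out by the single element $uv - f$ in the regular ring $A[u,v]$. Since $A[u,v]$ is a domain and $uv - f$ is nonzero, it is a nonzerodivisor, so $A[u,v]/(uv-f)$ is Cohen-Macaulay by the standard fact that killing a regular element in a Cohen-Macaulay ring preserves Cohen-Macaulayness. Because Cohen-Macaulay is a local (indeed stalkwise) property, gluing over an affine cover of $X$ yields that $\sX$ is Cohen-Macaulay. The only step requiring actual verification is the identification of the kernel of $A[u,v] \to A[ft, t^{-1}]$, and this is the main — but rather minor — technical point; everything else is a direct appeal to the hypersurface criterion.
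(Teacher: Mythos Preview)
Your proof is correct and follows essentially the same approach as the paper: reduce to an affine open where $\fa\cdot\cO_X=(f)$, identify $\sX|_U$ with $\Spec A[u,v]/(uv-f)$, and conclude by the hypersurface criterion. The only difference is that you supply an explicit verification of the isomorphism $A[u,v]/(uv-f)\cong A[ft,t^{-1}]$, which the paper simply asserts.
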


\begin{proof}
Locally, for an affine open $U \subset X$ where $\mathfrak{a}|_U = (f)$ for $f \in \mathcal{O}_X(U)$. Then
\[
\mathscr{X}|_U =\Spec \ \cO_U[ft, t^{-1}] = \operatorname{Spec} \mathcal{O}_U[x,y]/(xy - f).
\]
This is a hypersurface in $\mathbb{A}^{n+1}_U$. Hence it is Cohen-Macaulay.
\end{proof}

\begin{lemma}\label{normality}
$\mathscr{X}$ satisfies Serre's condition $R_1$ \cite[\href{https://stacks.math.columbia.edu/tag/033P}{Tag 033P}]{stacks-project} i.e., $\operatorname{codim}(\operatorname{Sing}(\mathscr{X})) \geq 2$.
\end{lemma}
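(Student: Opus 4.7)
The plan is to pass to local charts of $X$ where $\fa$ becomes principal, present $\mathscr{X}$ locally as a hypersurface, apply the Jacobian criterion, and conclude by a dimension count. Since $\psi \colon X \to \Spec R$ is a log resolution of $(\Spec R, \fa)$, the sheaf $\fa \cdot \cO_X$ is invertible and $X$ is regular. Hence $X$ is covered by affine opens $U \subseteq X$ on which $\fa|_U = (f)$ for some $f \in \cO_X(U)$; the positive-height hypothesis on $\fa$ ensures that $f$ is a nonzerodivisor. As observed in the previous lemma, we identify $\mathscr{X}|_U \cong \Spec \cO_U[x,y]/(xy - f)$, a hypersurface in the smooth ambient scheme $\A^2_U$.

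Next, I would apply the Jacobian criterion. In local coordinates $(x, y, u_1, \ldots, u_d)$ at a point of $\A^2_U$, the differential
\[
d(xy - f) = y\,dx + x\,dy - df
\]
vanishes on a point of $\mathscr{X}|_U$ precisely when $x = y = 0$ and $df = 0$. On $\mathscr{X}|_U$ the equations $x = y = 0$ already impose $f = xy = 0$, so without even needing the vanishing of $df$ we obtain
\[
\operatorname{Sing}(\mathscr{X}|_U) \;\subseteq\; V(x, y, f) \;\subset\; \A^2_U.
\]

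Finally, a dimension count closes the argument. Since $f$ is a nonzerodivisor on $\cO_U$, the sequence $x, y, f$ is regular in $\cO_U[x, y]$, so by Krull's height theorem $V(x, y, f)$ is a complete intersection of codimension $3$ in $\A^2_U$; equivalently $\dim V(x, y, f) = \dim U - 1$. On the other hand $\mathscr{X}|_U$ is a hypersurface in $\A^2_U$, so $\dim \mathscr{X}|_U = \dim U + 1$, and therefore
\[
\operatorname{codim}\!\bigl(\operatorname{Sing}(\mathscr{X}|_U),\; \mathscr{X}|_U\bigr) \;\geq\; (\dim U + 1) - (\dim U - 1) \;=\; 2.
\]
Because $R_1$ is local on $\mathscr{X}$ and the chosen charts cover it, this yields the lemma. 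I do not expect a genuine obstacle here: once the local hypersurface description from the previous lemma is in hand, the whole proof reduces to the Jacobian criterion plus a codimension count, and the bound $\geq 2$ is saturated exactly along components of $\fa \cdot \cO_X$ where the defining equation has nontrivial derivative data.
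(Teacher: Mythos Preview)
Your proof is correct and follows essentially the same approach as the paper: both use the local hypersurface presentation $\mathscr{X}|_U \cong \Spec \cO_U[x,y]/(xy-f)$, apply the Jacobian criterion to see that $\operatorname{Sing}(\mathscr{X}|_U) \subseteq V(x,y)$, and then finish with a dimension count. The only cosmetic difference is that the paper first passes to \'etale local coordinates to write $f$ as a monomial $u\prod s_i^{a_i}$ before computing, whereas you observe directly that $x,y,f$ is a regular sequence in $\cO_U[x,y]$; your route is slightly more streamlined but the content is the same.
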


\begin{proof}
 The scheme $\mathscr{X}|_U = \operatorname{Spec} \mathcal{O}_U[x,y]/(xy - f)$ \'etale locally isomorphic to $\Spec \ B$ where \(B = k[x, y, s_1, \dots, s_n] / (xy - u \prod_{i=1}^m s_i^{a_i})\). The singular locus is defined by the Jacobian ideal:
\[
J = \left( y, x, \frac{\partial f}{\partial s_1}, \dots, \frac{\partial f}{\partial s_n} \right).
\]
for $f = u \cdot s_1^{a_1} \cdots s_m^{a_m}$.
Hence, \(\text{Sing}(X) \subseteq V(x,y)\). Now,
\[
B/(x,y) \cong k[s_1, \dots, s_n] / \left( \prod_{i=1}^n s_i^{a_i} \right),
\]
which has dimension \(n - 1\). Since \(\dim B = n + 1\), the singular locus has codimension 2. Thus, \(B\) is $R_1$.
\end{proof}

\begin{rmk}
    The two lemmas above prove that $\sX$ is normal, \cite[\href{https://stacks.math.columbia.edu/tag/033P}{Tag 033P}]{stacks-project}. In particular, we can talk about $K_\sX$, see \cite[Corollary 1.18]{Schwede-Generalized}.
\end{rmk}

In the remainder of the paper we will prove Theorem B (Theorem \ref{theorem:multipliermodule-decomp}) through the following steps:

\begin{itemize}
    \item We will compute the canonical module $\w_{\sX}$.
    \item We will prove that for a closed point $\xi \in \sX$, the completion $\widehat{\cO_{\sX, \xi}}$ is isomorphic to completion $\cO_{\cX, 0}$ of an affine toric variety $\cX$.
    \item We will rephrase \cite[Theorem 2]{Blickle}, to use in our setup.
    \item Finally, we will use the above diagram to compute multiplier module $\cJ(\w_\rT , t^{-\lambda})$.
    
\end{itemize}

\section{Canonical module computation}
\subsection{Discrepancy under \'etale morphisms}We first prove invariance of log discrepancies under \'etale morphisms. This is very well-known, but we include a proof for only for completeness.
\begin{proposition}\label{DiscrepEtale}
Let $\phi: U \to X$ be an étale morphism of smooth varieties over a field $k$ of characteristic zero. Let $\fa \subseteq \cO_X$ be an ideal sheaf, and $E$ a prime divisor over $X$. Let $F$ be a divisor over $U$ dominating $E$ under $\phi$. Then:
\[
a(F; U, \phi^*\fa) = a(E; X, \fa).
\]
\end{proposition}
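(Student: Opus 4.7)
The plan is to compute both log discrepancies on a single pair of compatible log resolutions and to exploit that étale morphisms pull back $\Omega^1$ isomorphically and preserve multiplicities along divisors.

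First I fix a log resolution $\pi : Y\to X$ of $(X,\fa)$ on which $E$ is realized as a prime divisor, writing $\fa\cdot\cO_Y=\cO_Y(-Z)$. I then form the fibre square
\[
\begin{tikzcd}
Y':=Y\times_X U \arrow[r,"\phi'"] \arrow[d,"\pi'"'] & Y \arrow[d,"\pi"] \\
U \arrow[r,"\phi"'] & X
\end{tikzcd}
\]
Since étale base change preserves smoothness, properness, birationality and simple normal crossings, $\pi'$ is a log resolution of $(U,\phi^*\fa)$ with $\phi^*\fa\cdot\cO_{Y'}=\cO_{Y'}(-\phi'^*Z)$, and $\phi'$ is itself étale. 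After possibly dominating $(Y,Y')$ by a compatible further tower of blow-ups, I may assume the given $F$ appears as a prime component of $\phi'^{-1}(E)$ on $Y'$; étaleness of $\phi'$ then forces $F$ to occur with multiplicity exactly $1$ in $\phi'^*E$.

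The main computation splits into two ingredients. For the relative canonical divisor, étaleness of $\phi$ and $\phi'$ gives $K_U=\phi^*K_X$ and $K_{Y'}=\phi'^*K_Y$, hence
\[
K_{Y'/U}\;=\;K_{Y'}-\pi'^*K_U\;=\;\phi'^*\bigl(K_Y-\pi^*K_X\bigr)\;=\;\phi'^*K_{Y/X}.
\]
For multiplicities, since $F$ occurs reduced in $\phi'^{-1}(E)$, we have $\ord_F(\phi'^*D)=\ord_E(D)$ for every $\bQ$-divisor $D$ on $Y$. Combining these yields
\[
a(F;U,\phi^*\fa)\;=\;1+\ord_F(K_{Y'/U})-\ord_F(\phi'^*Z)\;=\;1+\ord_E(K_{Y/X})-\ord_E(Z)\;=\;a(E;X,\fa).
\]

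The main obstacle is the bookkeeping in the second paragraph: given the specific $F$ over $U$ dominating $E$, arranging a single pair of étale-compatible log resolutions that simultaneously realize both $E$ and $F$. This is standard — dominate any model of $U$ realizing $F$ by a common further resolution and lift the blow-up centers along $\phi'$ to a compatible tower over $Y$ — but it is the only non-formal step.
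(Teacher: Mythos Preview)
Your proof is correct and follows essentially the same approach as the paper: form the fibre product of a log resolution along $\phi$, use \'etaleness of both horizontal maps to get $K_{Y'/U}=\phi'^*K_{Y/X}$, and use unramifiedness to conclude that $F$ occurs with multiplicity one in $\phi'^*E$ so that orders along $E$ and $F$ agree.

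One small remark: the bookkeeping step you flag as ``the only non-formal step'' is in fact automatic. Since $F$ dominates $E$, its center on $Y'$ lies over $\eta_E$; as $\phi'$ is \'etale this center is a codimension-one point $\eta_{F_i}$ for some component $F_i$ of $\phi'^{-1}(E)$. But $\cO_{Y',\eta_{F_i}}$ is a DVR with fraction field $k(U)$, so any $\Z$-valued divisorial valuation centered there equals $\ord_{F_i}$; hence $F=F_i$ is already realized on $Y'$ and no further tower is needed. The paper's argument implicitly uses this (via the isomorphism of completed local rings at the generic points), which is why it does not raise the issue.
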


\begin{proof}
Consider the fiber product diagram:
\[
\begin{tikzcd}
V \arrow[r, "\psi"] \arrow[d, "g"] & Y \arrow[d, "h"] \\
U \arrow[r, "\phi"] & X
\end{tikzcd}
\]
where $h: Y \to X$ is a log resolution of $(X, \fa)$ with $\fa \cdot \cO_Y = \cO_Y(-G)$ where G is some effective Cartier divisor on Y, and $V = U \times_X Y$. Since $\phi$ is étale, $\psi$ and $g$ are étale.

The relative canonical divisors satisfy
\[
\psi^* K_{Y/X} = K_{V/U},
\]
where $K_{Y/X} = K_Y - h^* K_X$ and $K_{V/U} = K_V - g^* K_U$. This holds because:
\begin{align*}
\psi^* \Omega_{Y/X} &\cong \Omega_{V/U} \quad \text{(by étale base change)}, \\
\psi^* \omega_{Y/X} &\cong \omega_{V/U}.
\end{align*}

Let $K_Y = h^*(K_X + \Div(\fa)) + a(E)E + \sum b_i D_i$. Pulling back by $\psi$, we get
\[
\psi^* K_Y = \psi^* h^* (K_X + \Div(\fa)) + a(E) \psi^* E + \sum b_i \psi^* D_i.
\]
Since $\psi^* h^* = g^* \phi^*$,
\[
\psi^* K_Y = g^* \phi^* (K_X + \Div(\fa)) + a(E) \psi^* E + \sum b_i \psi^* D_i.
\]
Substituting $K_V = \psi^* K_Y$ (note, étale $\Rightarrow$ unramified),
\[
K_V = g^* \phi^* (K_X + \Div(\fa)) + a(E) \psi^* E + \sum b_i \psi^* D_i.
\]

Then,
\[
K_V - g^* K_U = g^* \phi^* \Div(\fa) + a(E) \psi^* E + \sum b_i \psi^* D_i.
\]
Note that $\phi^* K_X = K_U$  so,
\[
K_V - g^* K_U = g^* (\phi^* \Div(\fa)) + a(E) \psi^* E + \sum b_i \psi^* D_i.
\]
Thus
\[
K_{V/U} = g^* \Div(\phi^* \fa) + a(E) \psi^* E + \sum b_i \psi^* D_i.
\]

Let $\eta_E$ and $\eta_F$ be generic points of $E$ and $F$. Since $\psi$ is étale and $F$ dominates $E$, $\widehat{\cO}_{Y,\eta_E} \cong \widehat{\cO}_{V,\eta_F}$. Thus $\psi^* E = F$ near $\eta_F$. Therefore
\[
K_{V/U} = g^* \Div(\phi^* \fa) + a(E) F + \sum b_i \psi^* D_i,
\]
which implies $a(F; U, \phi^* \fa) = a(E; X, \fa)$.
\end{proof}

\begin{rmk}\label{MultiplierEtale}
    Proposition \ref{DiscrepEtale} immediately tells us that the multiplier module remains unchanged under smooth \'etale morphisms, see \cite[Example 9.5.44]{Lazarsfeld-Positivity-2}.
\end{rmk}

\subsection{Setup}
Let $k$ be a field of characteristic zero. Suppose the following are given:
\begin{itemize}
    \item $X $ a smooth variety of dimension $n-1$
    \item $\fa\cdot \cO_X = (-E) \subset \cO_X$ where $E$ has  SNC support.
    \item $\sX = \operatorname{Spec}_X \left( \bigoplus_{k \in \mathbb{Z}} \fa^k  \cdot \cO_X t^{k}\right)$ with convention that $\fa^{-k}:= A$ for $k \geq 0$. Note, $\sX$ is isomorphic to \(\Bl_{(\fa,t^{-1})}(X \times \A^1) \setminus \widetilde{V(t^{-1})}\), where \(\widetilde{V(t^{-1})}\) is the strict transform of \(X \times \{0\}\). The proof follows by looking at the affine charts $D_+(f)$ and $D_+(t^{-1})$. Gluing over $D_+(f t^{-1})$ gives global isomorphism, see Section \ref{DeformationtotheNormalCone}.
    \item $\mathcal{Y} = \Bl_{(\fa,t^{-1})}(X \times \A^1)$ is the blow-up of $X \times \A^1$ at $(\fa,t^{-1})$ and $\sX \subseteq \cY$ is an open immersion.
\end{itemize}
We want to compute the canonical divisor on $\cY$ first and then, restrict the canonical divisor $K_{\cY}$ to $\sX$ to get $K_{\sX}$. This reduces to computing the log discrepancy $A_{X\times\bA^1}(E)$ for each irreducible component $E \subset \sX_0 = V(t^{-1})$. Note that we already know the canonical divisor $K_{X\times \bA^1} = p_1^*K_X + p_2^*K_{\A^1}$. 
We have the following diagram:

\[
\begin{tikzcd}
                                                                           &  & \mathcal{Y} \arrow[rrdd, "\beta"] &  &                                                      &  &   \\
                                                                           &  &                                   &  &                                                      &  &   \\
\mathscr{X} \arrow[rruu, "\alpha", hook] \arrow[rrdd, "\tau"] \arrow[rrrr] &  &                                   &  & X \times \A^1 \arrow[lldd, "p_2"] \arrow[rr, "pr_1"] &  & X \\
                                                                           &  &                                   &  &                                                      &  &   \\
                                                                           &  & \A^1                              &  &                                                      &  &  
\end{tikzcd}
\]
where $\alpha : \sX \longrightarrow \cY$ is the open immersion, $\beta :\mathcal{Y} = \Bl_{(\fa,t^{-1})}(X \times \A^1) \longrightarrow X \times \A^1$ is the blow-up along $(\fa,t^{-1})$, and $\tau : \sX \longrightarrow \A^1$ is flat, see Section \ref{DeformationtotheNormalCone}. 
\subsection{Discrepancy computation} We prove the following proposition, under the above setup:
\begin{proposition}\label{DiscrepancyComputation} \hfill
    \begin{enumerate}
        \item There is a bijection between the irreducible components of $\sX_0$ and irreducible components of $E$. In particular, if $E = \sum a_iE_i$ where $E_i$'s are smooth and irreducible, then same is true for $\sX_0 = \sum a_iG_i$ where $G_i = (\beta \circ \alpha)^{-1}(E_i)$
        \item For every irreducible component $G \subseteq \sX_0$, we have $a(G; X\times \A^1) = \mult_G(\sX_0)$.
    \end{enumerate}
\end{proposition}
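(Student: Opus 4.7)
The approach is to reduce to a simple local model via étale descent and then carry out an explicit volume-form computation at the generic point of each component of $\sX_0$. First I would invoke Proposition \ref{DiscrepEtale} and Remark \ref{MultiplierEtale}: since discrepancies and multiplicities of divisors are invariant under étale base change, and the formation of $\sX$ commutes with étale base change on $X$, I may replace $X$ by a sufficiently small étale neighborhood on which $E$ has a standard SNC presentation. That is, pick coordinates $s_1, \ldots, s_{n-1}$ with $E = \sum_{i=1}^m a_i V(s_i)$, so that $\fa \cdot \cO_X = (f)$ where $f := s_1^{a_1} \cdots s_m^{a_m}$. In this local picture,
\[
\sX \;\cong\; \Spec k[s_1, \ldots, s_{n-1}, x, y] \big/ \bigl(xy - f\bigr),
\]
with $x = ft$, $y = t^{-1}$, and $\beta \circ \alpha$ the projection $(s_\bullet, x, y) \mapsto (s_\bullet, y)$.

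For part (1), I would identify $\sX_0 = V(y)$ set-theoretically as $V(y, f) = \bigcup_{i=1}^m V(y, s_i)$, defining the irreducible components $G_i := V(y, s_i)$. These components are affine spaces, hence smooth and irreducible, and are manifestly in bijection with the $E_i$ via $\beta \circ \alpha$. To compute $\mult_{G_i}(\sX_0)$, I would pass to the local ring $\cO_{\sX, \eta_{G_i}}$, which is a DVR by Lemma \ref{normality} (the singular locus of $\sX$ lies inside $V(x,y)$, while $x$ is a unit at $\eta_{G_i}$). The relation $xy = f$ at this generic point reads $xy = s_i^{a_i} \cdot (\text{unit})$, so $s_i$ is a uniformizer and $\ord_{G_i}(y) = a_i$, giving $\sX_0 = \sum_i a_i G_i$.

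For part (2), I would compute $a(G_i; X \times \A^1) = \ord_{G_i}(K_\sX - \beta^* K_{X \times \A^1})$ by pulling back the trivialization $\omega := ds_1 \wedge \cdots \wedge ds_{n-1} \wedge dy$ of $\omega_{X \times \A^1}$. Since $\eta_{G_i}$ is a smooth point of $\sX$ lying in the chart $\{x \neq 0\}$, I can use $\omega' := ds_1 \wedge \cdots \wedge ds_{n-1} \wedge dx$ as a local generator of $\omega_\sX$ there. Substituting $y = f/x$ and using that $df$ lies in the $k$-span of $ds_1, \ldots, ds_m$ (so $ds_1 \wedge \cdots \wedge ds_{n-1} \wedge df = 0$), a short calculation should yield $\beta^* \omega = -\tfrac{y}{x}\, \omega'$. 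Since $x$ is a unit at $\eta_{G_i}$, this gives $a(G_i; X \times \A^1) = \ord_{G_i}(-y/x) = a_i = \mult_{G_i}(\sX_0)$.

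The main obstacle I anticipate is that $\sX$ is only normal, not smooth, so the naive recipe of writing $K_\sX$ as a top form is not available globally. The key observation that neutralizes this is Lemma \ref{normality}: the singular locus is contained in $V(x,y)$, which has codimension at least two, so the generic points of the $G_i$ sit safely in the smooth locus, and the volume-form comparison above takes place in a DVR where the answer is unambiguous. The other mild bookkeeping step will be verifying globally (via étale descent) that the local matching $G_i \leftrightarrow E_i$ really glues into a bijection on components, but this is formal once the local picture is in place.
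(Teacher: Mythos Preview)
Your argument is correct, and for part (2) it takes a genuinely different route from the paper's. The paper proceeds by first \'etale-localizing near the generic point of a single component $E_i$ so that $f$ becomes $x_1^d$, then splitting off an $\A^{n-2}$ factor to reduce to the two-dimensional weighted blowup of $(x_1^d,t)$ in $\A^2$, and finally invoking the computation in \cite[6.39.1]{CortiKollarSmith} for that case. You instead stay in the full local model $k[s_\bullet,x,y]/(xy-f)$ and compute the discrepancy directly by comparing the generators $ds_1\wedge\cdots\wedge ds_{n-1}\wedge dy$ of $\omega_{X\times\A^1}$ and $ds_1\wedge\cdots\wedge ds_{n-1}\wedge dx$ of $\omega_\sX$ on the smooth chart $\{x\neq 0\}$; the substitution $y=f/x$ and the observation $ds_\bullet\wedge df=0$ immediately give the Jacobian factor $-y/x$, whose order along $G_i$ is $a_i$. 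This is more self-contained and elementary: it avoids both the product reduction and the external citation, at the cost of requiring you to check carefully that $\eta_{G_i}$ lies in the smooth locus (which you correctly extract from Lemma \ref{normality}). The paper's approach, by contrast, packages the endgame into a known reference and makes the ``one variable at a time'' structure of the SNC situation more visible. For part (1) your argument and the paper's are essentially identical. One cosmetic point: you write $\beta^*\omega$ where you mean $(\beta\circ\alpha)^*\omega$, but this is harmless since $\alpha$ is an open immersion.
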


\begin{proof}
Fix and affine open $U\subset X$ such that $\fa\cdot\cO_U = (f)$ where $f =  s_1^{a_1}\cdots s_m^{a_m}$. Then, $$\sX|_U \simeq \Spec \ \cO_U[ft, t^{-1}]\simeq \Spec \ \cO_U[x, t^{-1}]/(xt^{-1}-s_1^{a_1}\cdots s_m^{a_m}).$$ So, $$(\sX|_U)_0 \simeq\Spec \ \cO_U[x]/(s_1^{a_1}\cdots s_m^{a_m}) \simeq \Spec \ ((\cO_U/(s_1^{a_1}\cdots s_m^{a_m}))[x]).$$  This proves part $(1)$. 

For part $(2)$, we will do several reductions:

\begin{itemize}
 \item Note that $\sX$ is the blowup of $X\times \A^1$ along $(f,t^{-1})$ with the strict transform of $X\times \{0\}$ removed. So, for any irreducible component $G_i \subset \mathscr{X}_0 = V(t^{-1})$, the discrepancy only depends on the valuation $\ord_{G_i}$ which can be computed on some neighborhood of the generic point of $G_i$ (see \cite[Remark 2.23]{KollarMori98}). By part $(1)$ we can assume it to be centered at $\eta \times \{0\} \in X \times \mathbb{A}^1$, where $\eta$ is the generic point of the corresponding divisor $E_i \subset X$. 
\item  As $U \subset X$ is smooth and $V(f)$ is SNC divisor, around the generic point $\eta$ of an irreducible component of $V(f)$, we can find $V \subset U$ such that $\rho:V \longrightarrow \A^{n-1} = \Spec$ $k[x_1, \cdots , x_{n-1}]$ is \'etale and $f$ corresponds to $x_1^d$, by \cite[054L]{stacks-project}. 
\item For \'etale morphism between smooth varieties, log discrepancy remains unchanged by Proposition \ref{DiscrepEtale}.

    \item So, \'{e}tale locally, we can assume that $X= \bA^{n-1}= \Spec k[x_1,... , x_{n-1}]$. Thus we are blowing up $(x_1^d,t)$ in $\bA^{n-1} \times \bA^1 = \bA^n$ and all we need to do is to compute the log discrepancy of the exceptional divisor.
    \item Furthermore, by taking a product, this reduces to understanding the log discrepancy of the blowup of $\bA^1 \times \bA^1 = \A^2$ along $(x_1^d, t)$ which is precisely what \cite[6.39.1]{CortiKollarSmith} computes. We do this in the next \textbf{Claim}.
\end{itemize}

\begin{claim}
Let \(W = \A^{n-1} \times \A^1 = \A^n\), \(Z = V(x_1^d, t)\), $\pi: \widetilde{W} \longrightarrow W$ be the blow up at Z with exceptional divisor $G$. Then,
\[
a(G; W) = d.
\]
\end{claim}

\begin{proof}
Let \(S = V(x_2, \dots, x_{n-1}) \cong \A^2\). The inclusion \(S \hookrightarrow W\) is transverse to \(Z\). Since product commutes with blow up, we have the following isomorphisms $$\widetilde{W} = \mathrm{Bl}_{Z} W  \simeq \mathrm{Bl}_{Z \cap S} S \times \mathbb{A}^{n-2} = \widetilde{S} \times \mathbb{A}^{n-2}  $$ where $(\widetilde{S} = \Bl_{Z \cap S} S).$

The blowup commutes with restriction, by \cite{Hartshorne}, $\widetilde{S} = \Bl_{Z \cap S} S \cong \mathrm{strict\ transform \ of \ } S \subset \widetilde{W}$.

The canonical divisors satisfy
\[
K_{\widetilde{W}} = \operatorname{pr}_1^* K_{\widetilde{S}} + \operatorname{pr}_2^* K_{\A^{n-2}}
\]
\[
K_W = \operatorname{pr}_1^* K_S + \operatorname{pr}_2^* K_{\A^{n-2}}
\]
By \cite[6.39.1]{CortiKollarSmith}, \(K_{\widetilde{S}} = \pi_S^* K_S + d G_S\), where \(G_S = E \cap \widetilde{S}\). Then
\[
K_{\widetilde{W}} = \pi^* (\operatorname{pr}_1^* K_S + \operatorname{pr}_2^* K_{\A^{n-2}}) + d G = \pi^* K_W + d G
\]
So \(a(G; W) = a(G_S; S)\) = $d$.
\end{proof}
Thus we get $a(G; X\times \A^1) = \mult_G(\sX_0)$.
\end{proof}

\subsection{Canonical module $\w_\sX$}\label{CanonicalComputation}
Note that $\sX \subset \mathcal{Y}$ is the complement of the strict transform of $X \times \{0\}$. The map $\tilde{\pi}: \sX \to X$ denotes the composition
\[
\sX \hookrightarrow \mathcal{Y} \xrightarrow{\pi} X \times \mathbb{A}^1 \xrightarrow{p_1} X
\]

Next note, \[
\w_{X \times \A^1} \simeq \w_X \boxtimes \w_{\A^1} \simeq \cO_X(K_X) \boxtimes \cO_{\A^1}(-0) \simeq \cO_{X\times \A^1}(pr_1^*K_X - X \times {0})
\]

So, from Proposition \ref{DiscrepancyComputation} we get

\[
\w_{\sX} = (\w_\cY)|_\sX= {\pi}^* \w_{X\times \A^1} \otimes \cO_{\cY}(\cY_0 - \widetilde{X\times 0})|_{\sX} = (\pi|_{\sX})^*\w_{X \times \A^1} \otimes \cO_{\sX}(\sX_0) = (\pi|_{\sX})^*\cO_{X\times \A^1}(pr_1^*K_X) = \tilde{\pi}^*(\w_X)
\]where $\cY_0 = V(t^{-1}).$

Pushing forward and using projection formula,  we get,

\[
\tilde{\pi}_* \w_{\sX} = \bigoplus_{n \in \mathbb{Z}}(\fa^{n} \cdot \w_X)t^{n} \label{canonialDNC}
\] with convention that $\fa^k \cdot \cO_X = \cO_X, \forall k \leq 0.$

\begin{rmk}
    Similar computation of canonical modules have appeared in the past. For example, see \cite[Corollary 3.3]{TomariWatanabe} when $height(\fa) \geq 2$ and \cite[Appendix B]{MSTWW} when $\fa$ is $\fm$-primary. In \cite{Ajit-Simper-Char-p}, the author of this article, along with Hunter Simper, proved a general result relating canonical modules of the extended Rees algebra $\cT$ and the Rees algebra $\cS$, for any ideal $\fa$ with $height(\fa) > 0$.
\end{rmk}

\section{Blickle's formula for multiplier modules}
We first collect the following results about Toric Varieties. The standard reference is \cite{ToricBook}.
\begin{defn}
Let $N \cong \Z^n$ be a lattice. A \textbf{rational polyhedral cone} $\sigma \subseteq N_\R = N \otimes_\Z \R$ is a set
\[
\sigma = \{ r_1 v_1 + \cdots + r_s v_s \mid r_i \geq 0 \}
\]
for vectors $v_1, \dots, v_s \in N$ (the \textbf{generators}). The cone is \textbf{strongly convex} if $\sigma \cap (-\sigma) = \{0\}$ (contains no lines). It is \textbf{rational} because the $v_i$ have integer coordinates.
\end{defn}

\begin{defn}
The \textbf{dual lattice} $M = \operatorname{Hom}(N, \Z) \cong \Z^n$ has a pairing $\langle \cdot, \cdot \rangle: M \times N \to \Z$. The \textbf{dual cone} $\svee \subseteq M_\R$ is
\[
\svee = \{ m \in M_\R \mid \langle m, n \rangle \geq 0 \text{ for all } n \in \sigma \}.
\]
The \textbf{interior} of $\svee$ is
\[
\mathrm{int}(\svee) = \{ m \in M_\R \mid \langle m, n \rangle > 0 \text{ for all } n \in \sigma \setminus \{0\} \}.
\]
\end{defn}

\begin{defn}
Given a cone $\sigma \subseteq N_\R$, the coordinate ring
\[
R_\sigma = k[\svee \cap M] = k \text{-span}\{x^m \mid m \in \svee \cap M\}
\]
defines the \textbf{affine toric variety} $X_\sigma = \Spec(R_\sigma)$. Monomials $x^m$ correspond to lattice points $m \in \svee \cap M$.
\end{defn}

\begin{exa}

For $N = \Z^2$, $\sigma = \operatorname{cone}\{(1,0), (0,1)\}$ (first quadrant), then $\svee = \sigma$, and $X_\sigma = \mathbb{A}^2_{k}$ with coordinate ring $k[x,y]$.
\end{exa}

\begin{defn}
Let $v_1, \dots, v_s$ be the primitive generators of the rays of $\sigma$ (minimal lattice points). These define \textbf{prime Weil divisors} $D_1, \dots, D_s$ on $X_\sigma$. Any torus-invariant Weil divisor is $D = \sum_{i=1}^s a_i D_i$ with $a_i \in \Z$.
\end{defn}

\begin{defn}
The \textbf{canonical divisor} is $K_X = -\sum_{i=1}^s D_i$.
\end{defn}

\begin{defn}
For a monomial $t = x^u$ ($u \in \svee \cap M$), its divisor is
\[
\Div(x^u) = \sum_{i=1}^s \langle u, v_i \rangle D_i.
\]
The pairing $\langle u, v_i \rangle \in \Z_{\geq 0}$ since $u \in \svee$ and $v_i \in \sigma$.
\end{defn}

\begin{defn}
For a Weil divisor $D = \sum_{i=1}^s a_i D_i$, the sheaf $\cO_X(D)$ consists of rational functions $f$ such that $\Div(f) + D \geq 0$. For monomials
\[
x^v \in \Gamma(X, \cO_X(D)) \iff \langle v, v_i \rangle + a_i \geq 0 \quad \forall i = 1, \dots, s.
\]
\end{defn}

\begin{defn}
The \textbf{canonical module} $\omega_X \subseteq R_\sigma$ is the ideal
\[
\omega_X = \{ x^m \mid m \in \mathrm{int}(\svee) \} = \langle x^m \mid \langle m, v_i \rangle > 0 \text{ for all } i \rangle.
\]
\end{defn}

\begin{defn}
For a monomial ideal $\mathfrak{a} \subseteq R_\sigma$, its \textbf{Newton polyhedron} $\Newt(\mathfrak{a})$ is the convex hull in $M_\R$ of the set $\{ m \in M \mid x^m \in \mathfrak{a} \}$. For a principal ideal $\mathfrak{a} = (x^u) $
\[
\Newt((x^u)) = u + \svee = \{ u + m \mid m \in \svee \}.
\]
\end{defn}

\begin{thm} \cite{Blickle}
 Let X be an affine toric variety and $\fa$ be a monomial ideal. Then
\[
\cJ(\omega_X, \mathfrak{a}^\lambda) = \langle x^v \mid v \in \mathrm{int}(\lambda \cdot \Newt(\mathfrak{a})) \rangle \subseteq \omega_X.
\]
\end{thm}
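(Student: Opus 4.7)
The plan is to compute $\cJ(\omega_X, \fa^\lambda)$ using a torus-equivariant log resolution and then translate the ceiling calculation on that resolution into a lattice-point condition on the Newton polyhedron. Since $\fa$ is monomial, I would build such a resolution inside the category of toric varieties: take a smooth refinement $\Sigma'$ of the cone $\sigma$ whose set of rays contains every inward facet normal of $\Newt(\fa)$ lying in $\sigma$, and set $Y = X_{\Sigma'}$ with $\pi \colon Y \to X$ the induced toric morphism. Then $Y$ is smooth, $\pi$ is proper and birational (so a log resolution of $(X,\fa)$), and letting $\{w\}$ denote the primitive ray generators of $\Sigma'$, we have $\fa \cdot \cO_Y = \cO_Y(-G)$ with $G = \sum_w c_w D_w$, where $c_w = \min_j \langle m_j, w\rangle$ for any choice of monomial generators $x^{m_1},\dots,x^{m_r}$ of $\fa$.

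Next I would write out $\lceil K_Y - \lambda G\rceil$ explicitly. Because $K_Y = -\sum_w D_w$ on the smooth toric variety $Y$, the identity $\lceil -x\rceil = -\lfloor x\rfloor$ gives
\[
\lceil K_Y - \lambda G\rceil \;=\; \sum_w \bigl(-1 - \lfloor \lambda c_w \rfloor\bigr)\, D_w.
\]
The standard toric description of global sections of a torus-invariant divisor on the toric variety $Y$ over the affine base $X$ says that the character $x^v$ lies in $\pi_* \cO_Y(\lceil K_Y - \lambda G\rceil)$ iff $\langle v, w \rangle \geq 1 + \lfloor \lambda c_w \rfloor$ for every ray $w$ of $\Sigma'$; since $\langle v, w\rangle \in \Z$, this is equivalent to the strict inequality $\langle v, w\rangle > \lambda c_w$.

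It then remains to identify the set of lattice points cut out by these strict inequalities with $\mathrm{int}(\lambda \cdot \Newt(\fa)) \cap M$. The polyhedron $\lambda \cdot \Newt(\fa) \subseteq M_\R$ is full-dimensional, and its facet inequalities are exactly $\langle v, w\rangle \geq \lambda c_w$ as $w$ runs over the inward facet normals appearing in the normal fan of $\Newt(\fa)$. By construction $\Sigma'$ contains all such rays, while any extra rays of $\Sigma'$ yield inequalities that are automatically implied by the essential facet inequalities; passing to strict inequalities therefore cuts out exactly the topological interior. Combining this with the previous paragraph gives the formula $\cJ(\omega_X,\fa^\lambda) = \langle x^v \mid v \in \mathrm{int}(\lambda \cdot \Newt(\fa)) \rangle$.

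The main obstacle I anticipate is the last bookkeeping step: one must carefully verify that only the facet-defining rays of $\lambda \cdot \Newt(\fa)$ contribute essentially, and that the additional rays of $\Sigma'$ introduce no spurious strict inequality. This is a standard convexity argument, but worth recording because it also explains why the answer is independent of the choice of $\Sigma'$, matching the well-known resolution-independence of $\cJ(\omega_X,\fa^\lambda)$.
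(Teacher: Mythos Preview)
Your proposal is correct and follows the standard toric approach to this computation (essentially the method in Blickle's paper, which generalizes Howald's formula for monomial ideals in affine space). The paper, however, does not supply its own proof of this theorem: it is stated with a citation to \cite{Blickle} and then used as input to the subsequent Lemma~\ref{BlickleModule}, which specializes it to a principal monomial ideal. So there is no paper proof to compare against; your argument simply fills in what the paper takes as known.

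One small comment on the bookkeeping you flagged at the end: the cleanest way to dispose of the ``extra ray'' worry is to note that for a full-dimensional polyhedron $P$ with support function $h_P(w)=\min_{p\in P}\langle p,w\rangle$, a point $v$ lies in $\mathrm{int}(P)$ if and only if $\langle v,w\rangle>h_P(w)$ for \emph{every} nonzero $w$ in the recession cone of the normal fan (not just facet normals). Since $c_w=\min_j\langle m_j,w\rangle$ is exactly $h_{\Newt(\fa)}(w)$ for all $w\in\sigma$, the strict inequalities coming from the full ray set of $\Sigma'$ are automatically equivalent to those coming from the facet normals alone, and both cut out $\mathrm{int}(\lambda\cdot\Newt(\fa))$. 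This observation also makes the resolution-independence transparent, as you noted.
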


\begin{lemma}\label{BlickleModule}
    Additionally, in the above situation, when $\fa = (x^u)$ is principal, we have \[ \cJ(\omega_X, \mathfrak{a}^\lambda) = \cO_X(\lceil K_X - \lambda \Div(x^u) \rceil)\]
\end{lemma}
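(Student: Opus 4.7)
The plan is to apply the toric formula for $\cJ(\omega_X, \fa^\lambda)$ just quoted from \cite{Blickle} and show that both sides are the same monomial submodule of $\omega_X$ by comparing, for each monomial $x^v$, the integer inequalities in $\langle v, v_i\rangle$ that characterize membership.

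First, I would unwind the Newton polyhedron of the principal ideal: since $\Newt((x^u)) = u + \svee$, one has $\lambda \cdot \Newt(\fa) = \lambda u + \svee$, and the interior is $\lambda u + \mathrm{int}(\svee)$. Thus the condition $v \in \mathrm{int}(\lambda \cdot \Newt(\fa))$ is equivalent to $\langle v - \lambda u, v_i \rangle > 0$ for every primitive ray generator $v_i$, i.e.\
\[
\langle v, v_i\rangle > \lambda \langle u, v_i\rangle \quad \text{for all } i = 1, \dots, s.
\]
Since $v \in M$ and $\langle v, v_i\rangle \in \Z$, this strict inequality on a real bound is equivalent to $\langle v, v_i\rangle \ge \lfloor \lambda \langle u, v_i\rangle \rfloor + 1$.

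Next I would compute the divisor $D := \lceil K_X - \lambda \Div(x^u) \rceil$ explicitly. Using $K_X = -\sum_i D_i$ and $\Div(x^u) = \sum_i \langle u, v_i\rangle D_i$, I get
\[
K_X - \lambda \Div(x^u) = -\sum_i \bigl(1 + \lambda \langle u, v_i\rangle\bigr) D_i,
\]
and applying $\lceil - r \rceil = - \lfloor r \rfloor$ componentwise yields
\[
D = -\sum_i \bigl(1 + \lfloor \lambda \langle u, v_i\rangle\rfloor\bigr) D_i.
\]
By the standard criterion recalled above, $x^v \in \Gamma(X, \cO_X(D))$ if and only if $\langle v, v_i\rangle + a_i \ge 0$ for all $i$, where $a_i = -(1 + \lfloor \lambda \langle u, v_i\rangle\rfloor)$; this rearranges to exactly $\langle v, v_i\rangle \ge \lfloor \lambda \langle u, v_i\rangle\rfloor + 1$.

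Comparing the two characterizations shows that both $\cJ(\omega_X, \fa^\lambda)$ and $\cO_X(\lceil K_X - \lambda \Div(x^u)\rceil)$ are the $k$-span of the same set of monomials $x^v$, hence equal as submodules of the function field (and both sit inside $\omega_X$ because each inequality forces $\langle v, v_i\rangle \ge 1$). The only subtlety, rather than a genuine obstacle, is the interchange between strict interior inequalities and the floor/ceiling expressions, which is handled uniformly by the observation that $\langle v, v_i\rangle$ is an integer.
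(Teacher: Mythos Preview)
Your proposal is correct and follows essentially the same approach as the paper: both compute the monomial membership condition from Blickle's interior-of-Newton-polyhedron formula and compare it to the section criterion for $\cO_X(\lceil K_X - \lambda \Div(x^u)\rceil)$, reducing both to the same inequality $\langle v, v_i\rangle \ge \lfloor \lambda \langle u, v_i\rangle\rfloor + 1$ via the integrality of $\langle v, v_i\rangle$. The only cosmetic difference is that the paper writes $\lambda u + \lambda\,\mathrm{int}(\svee)$ and $\lfloor 1 + \lambda\langle u,v_i\rangle\rfloor$ where you (equivalently) use that $\svee$ is a cone and pull the integer $1$ outside the floor.
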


\begin{proof}
   
For $\mathfrak{a}  = (x^u)$, we have $\Newt((x^u)) = u + \svee$. Scaling by $\lambda > 0$
\[
\lambda \cdot \Newt((x^u)) = \lambda(u + \svee) = \lambda u + \lambda \svee.
\]
The interior is $\mathrm{int}(\lambda \cdot \Newt((x^u))) = \lambda u + \lambda \mathrm{int}(\svee)$
since $\lambda > 0$. Thus
\[
x^v \in \cJ_{\omega_X}((x^u)^\lambda) \iff v \in \lambda u + \lambda \mathrm{int}(\svee) \iff v - \lambda u \in \lambda \mathrm{int}(\svee).
\]
Using the definition of $\mathrm{int}(\svee)$
\[
v - \lambda u \in \lambda \mathrm{int}(\svee) \iff \langle v - \lambda u, n \rangle > 0 \quad \forall n \in \sigma \setminus \{0\}.
\]
Since $\sigma$ is generated by the $v_i$, this simplifies to
\[
\langle v, v_i \rangle > \lambda \langle u, v_i \rangle \quad \forall i = 1, \dots, s. \quad (\star)
\]

On the other hand,
\[
\left\lceil K_X - \lambda \cdot \Div(x^u) \right\rceil = \sum_{i=1}^s \left\lceil -(1 + \lambda \langle u, v_i \rangle) \right\rceil D_i = -\sum_{i=1}^s \lfloor 1 + \lambda \langle u, v_i \rangle \rfloor D_i,
\]
using the identity $\lceil -x \rceil = -\lfloor x \rfloor$ for $x \in \R$. A monomial $x^v$ is a section of $\cO_X(\lceil K_X - \lambda \Div(x^u) \rceil)$ if and only if

\[
\Div(x^v) + \left\lceil K_X - \lambda \cdot \Div(x^u) \right\rceil \geq 0,
\]
that is,
\[
\sum_{i=1}^s \langle v, v_i \rangle D_i - \sum_{i=1}^s \lfloor 1 + \lambda \langle u, v_i \rangle \rfloor D_i \geq 0.
\]
This holds if and only if for each $i$,

\[
\langle v, v_i \rangle \geq \lfloor 1 + \lambda \langle u, v_i \rangle \rfloor. \quad (\star\star)
\]

Now, noting for $b \in \Z$
\[
b > \lambda a \iff b \geq 1 + \lfloor \lambda a \rfloor.
\]

gives $(\star) \iff (\star\star)$. 
\end{proof}

\begin{rmk}
    Alternately, one can prove Lemma \ref{BlickleModule} following ideas from \cite[Theorem A.2.]{ST-Appendix}.
\end{rmk}

\section{Toric geometry}

\subsection{Setup}
Let $X \ra \Spec R$ be a log resolution of singularities of $(\Spec R, \fa)$. Fix a closed point $x \in X$. By Cohen's Structure Theorem, $\cOmpletion_{X,x} \cong k[[s_1, \dots, s_n]]$. Assume  $\fa \cdot \cOmpletion_{X, x} = (f)$ where $f = u \cdot s_1^{a_1} \cdots s_m^{a_m}$ with $m \leq n$, $u$ is a unit. Consider the deformation to the normal cone
\[
\sX \defeq \Spec_X \left( \bigoplus_{n \in \mathbb{Z}} \fa^n \cO_X \right) = \Spec_X \left( \cO_X[\fa t, t^{-1}] \right).
\]
Let $\resdiv \defeq V(t^{-1})$. 

\begin{lemma}\label{localToric}
Let $\xi \in \mathscr{X}$ be a closed point over $x \in X$. There exists an affine toric variety $\mathcal{X}_f = \Spec \ B = \Spec \left( k[x,y,s_1,\dots,s_n]/(xy - f) \right)$ and an isomorphism of complete local rings:
\[
\iota: \widehat{\mathcal{O}}_{\mathcal{X}_f,0}  \xrightarrow{\sim} \widehat{\mathcal{O}}_{\mathscr{X},\xi} 
\]
where, $\iota: x \mapsto f\cdot t$, $y \mapsto t^{-1}, \mathrm{ and ,} s_i \mapsto s_i$.
\end{lemma}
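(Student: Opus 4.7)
The plan is to realize $\mathscr{X}$ locally around $\xi$ as a binomial hypersurface, take the completion, and absorb the unit from $f$ into a change of coordinates. Because $\pi\colon X\to \Spec R$ is a log resolution, $\fa\cdot\cO_X$ is invertible, hence locally principal, so I can pick a Zariski open $U = \Spec A \ni x$ with $\fa \cdot A = (g)$ for some $g \in A$. On such a $U$ one has
\[
\cO_U[\fa t, t^{-1}] = A[gt, t^{-1}] \;\cong\; A[X, Y]/(XY - g)
\]
via $X\mapsto gt$, $Y\mapsto t^{-1}$. That this is the full presentation is a direct kernel computation for $A[X,Y]\twoheadrightarrow A[gt,t^{-1}]\subseteq A[t,t^{-1}]$: modulo $XY - g$ every element has the form $\sum a_iX^i+\sum_{j>0}b_jY^j$, which maps to $\sum a_i g^i t^i+\sum b_j t^{-j}$; since $g$ is a nonzero divisor in the normal integral domain $A$ and distinct powers of $t$ are linearly independent, this can vanish only when all coefficients are zero.

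Next I would take $\xi$ to be the closed point of $\mathscr{X}$ over $x$ corresponding to the maximal ideal $\fn = (\fm_x,X,Y)$, i.e.\ the unique point over $x$ at which both $gt$ and $t^{-1}$ vanish. Completion commutes with quotients by finitely generated ideals, and by Cohen's structure theorem $\widehat A_{\fm_x}\cong k[[s_1,\dots,s_n]]$, so
\[
\widehat\cO_{\mathscr{X},\xi} \;\cong\; k[[s_1,\dots,s_n,X,Y]]\big/(XY-g).
\]
In the completion $(g) = (f)$, so $g = v f$ for some unit $v \in k[[s_1,\dots,s_n]]^\times$. I would then define
\[
\iota\colon k[[s_1,\dots,s_n,x,y]]/(xy-f) \longrightarrow k[[s_1,\dots,s_n,X,Y]]/(XY-g)
\]
by $x\mapsto v^{-1}X$, $y\mapsto Y$, $s_i\mapsto s_i$; this is well-defined since $(v^{-1}X)Y = v^{-1}g = f$, and its explicit inverse is $X\mapsto v x$, $Y\mapsto y$, $s_i\mapsto s_i$. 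Translating through $X = gt$, one has $v^{-1}X = v^{-1}g t = f t$, so $\iota$ takes the stated form $x\mapsto ft$, $y\mapsto t^{-1}$, $s_i\mapsto s_i$.

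To finish, I would identify the source with $\widehat{\cO}_{\mathcal{X}_f,0}$. Absorbing the unit $u$ in $f = u \cdot s_1^{a_1}\cdots s_m^{a_m}$ by the same style of substitution, we may as well take $f$ to be the pure monomial $s_1^{a_1}\cdots s_m^{a_m}$; then $B := k[s_1,\dots,s_n,x,y]/(xy - s_1^{a_1}\cdots s_m^{a_m})$ is a standard binomial hypersurface equipped with an $(n+1)$-dimensional dense torus action (for example $s_i\mapsto\lambda_is_i$, $x\mapsto\mu x$, $y\mapsto\mu^{-1}\lambda_1^{a_1}\cdots\lambda_m^{a_m}y$), hence an affine toric variety, and its $\fm$-adic completion at the origin is precisely the source of $\iota$. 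The main obstacle I anticipate is cleanly bookkeeping the unit $u$ throughout: it never affects the ideals in question but must be tracked carefully so that, after composing all intermediate isomorphisms, the resulting $\iota$ is expressible in the exact form recorded in the lemma.
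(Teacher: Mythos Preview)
Your proposal is correct and follows essentially the same approach as the paper: both use Cohen's structure theorem to identify $\widehat{\cO}_{X,x}\cong k[[s_1,\dots,s_n]]$, realize $\mathscr{X}$ locally as the hypersurface $XY=g$ (with $g$ a local generator of $\fa\cdot\cO_X$), complete, and then send $x\mapsto ft$, $y\mapsto t^{-1}$, $s_i\mapsto s_i$. Your version is in fact more detailed than the paper's---you supply the kernel computation for the presentation $A[X,Y]/(XY-g)\cong A[gt,t^{-1}]$ and track the unit relating $g$ to $f$ explicitly, whereas the paper simply asserts $\widehat{\cO}_{\mathscr{X},\xi}\cong k[[ft,t^{-1},s_1,\dots,s_n]]$ and writes down $\iota$.
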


\begin{proof}
By Cohen's Structure Theorem, $\widehat{\mathcal{O}}_{X,x} \cong k[[s_1,\dots,s_n]]$ and $\mathfrak{a} \cdot \widehat{\mathcal{O}}_{X,x} = (f)$ where $f = u \cdot s_1^{a_1}\cdots s_m^{a_m}$, $u$ being a unit. 

The complete local ring at $\xi$ is
\[
\widehat{\mathcal{O}}_{\mathscr{X},\xi} \cong k[[f\cdot t,t^{-1},s_1,\dots,s_n]]
\]

For $\mathcal{X}_f = \Spec \left( k[x,y,s_1,\dots,s_n]/(xy - f) \right)$, the completion at $0 = V(x,y,s_1,\dots,s_n)$ is
\[
\widehat{\mathcal{O}}_{\mathcal{X}_f,0} \cong k[[x,y,s_1,\dots,s_n]]/(xy - f)
\]

The map $\iota: x \mapsto f\cdot t$, $y \mapsto t^{-1}, \ \mathrm{ and ,} \ s_i \mapsto s_i$ gives the required isomorphism.
\end{proof}

Consider the ring $ B = k[x, y, s_1, \dots, s_n] / (xy - u \cdot s_1^{a_1} \cdots s_m^{a_m}) $ where $u$ is a unit. Note, by replacing $x$ with $xu^{-1}$, we may assume that $f = s_1^{a_1} \cdots s_n^{a_m}$. Also, note, $B$ is an integral domain, normal, and the ideal $\mathfrak{m} = (x,y,s_1,\dots,s_n)$ is a maximal ideal. Set $ \cX = \Spec(B) $ and $ D_y = V(y) $. We prove $\cX$ is an affine toric variety (i.e, normal, with dense torus action).

\subsection{Affine Toric Model}\label{step:toric-model} 

Define $B \defeq k[x,y,s_1,\dots,s_n] / (xy - f)$ with $\mathfrak{m} \defeq (x,y,s_1,\dots,s_n)$. Let $\lattice \defeq \Z^{n+2}$ with basis $\basis{x}, \basis{y}, \basis{s_1}, \dots, \basis{s_n}$. The relation $(xy = f)$ corresponds to $v \defeq (1,1,-a_1,\dots,-a_m,0,\dots,0) \in \lattice$. Define $N \defeq \lattice / \Z v$ of rank $n+1$. The dual lattice is given by
\[
\duallattice \defeq \left\{ (b_x, b_y, b_1, \dots, b_n) \in \Z^{n+2} \mid b_x + b_y = \sum_{j=1}^m a_j b_j \right\}.
\]
Define $\phi: k[\dualcone \cap \duallattice] \ra B$ by $\chi^{(b_x, b_y, \mathbf{b})} \mapsto x^{b_x} y^{b_y} \mathbf{s}^{\mathbf{b}}$.

Thus, $\phi$ is an isomorphism and $B$ is affine toric. The completion $R \cong \widehat{B}_{\mathfrak{m}}$ is the completion of an affine toric variety at its torus-fixed point.

\subsection{Explicit Torus Action}\label{step:torus-action}
Define the torus $\torus \defeq (\Gm)^{n+1} = \Spec k[t_1^{\pm 1}, \dots, t_n^{\pm 1}, t_{n+1}^{\pm 1}]$. The action on $B$:
\begin{align*}
    (t_1,\dots,t_n,t_{n+1}) \action x & \defeq t_{n+1} x \\
    (t_1,\dots,t_n,t_{n+1}) \action y & \defeq \left( \prod_{j=1}^m t_j^{a_j} \right) t_{n+1}^{-1} y = t_y \cdot y\\
    (t_1,\dots,t_n,t_{n+1}) \action s_j & \defeq t_j s_j \quad (j=1,\dots,n)
\end{align*}
Preserves the relation:
\[
(\bft \action x)(\bft \action y) = \left( \prod_{j=1}^m t_j^{a_j} \right) xy = \left( \prod_{j=1}^m t_j^{a_j} \right) f = \prod_{j=1}^m (\bft \action s_j)^{a_j}.
\]
The point $p = (1,1,\dots,1)$ has trivial stabilizer, so $\torus \cdot p$ is dense. The divisor $\resdiv = V(y)$ is invariant since:
\[
\bft \action y = \left( \prod_{j=1}^m t_j^{a_j} \right) t_{n+1}^{-1} y.
\]

\subsection{Dense Torus Orbit}

Assume $ u = 1 $ by replacing $ x $ with $ x/u $. Then relation becomes $ (x/u)y = \prod s_i^{a_i} $. Set $ U = \cX \setminus V(x y s_1 \cdots s_n) $. Define morphisms

\[
\phi: T \to U, \quad \phi(\mathbf{t}) = \left( t_0 \prod_{j=1}^n t_j^{a_j},  t_0^{-1},  t_1, \dots, t_n \right)
\]

\[
\psi: U \to T, \quad \psi(x,y,s_1,\dots,s_n) = \left( y^{-1}, s_1, \dots, s_n \right)
\]

It is easy to verify that 
\begin{align*}
    \psi \circ \phi(\mathbf{t}) = \mathbf{t} \\
    \phi \circ \psi(\xi) = \xi
\end{align*}

Thus, $ \phi: T \to U $ is an isomorphism with inverse $ \psi $. The $ T $-action on $ U $ is transitive and free, so $ T $ is dense in $ X $.

\subsection{Normality of $ \cX $}

$ B $ is integral domain. To show normality, we verify Serre's conditions (R1) and (S2).

\textbf{(S2):} $ B $ is Cohen-Macaulay (a hypersurface), hence (S2).

\textbf{(R1):} Singular locus has codimension $ \geq 2 $. The Jacobian ideal
\[
J = \left( \frac{\partial f}{\partial x}, \frac{\partial f}{\partial y}, \frac{\partial f}{\partial s_1}, \dots, \frac{\partial f}{\partial s_n} \right) = \left( y, x, -a_1 s_1^{a_1-1} \prod_{j \neq 1} s_j^{a_j}, \dots, -a_n s_n^{a_n-1} \prod_{j \neq n} s_j^{a_j} \right)
\]
Singular locus $ V(J) \subseteq V(x,y) $. Thus, $ B $ is normal (see Lemma \ref{normality} for details).

\subsection{Torus-Invariant Prime Divisors}\label{torusinvdivisor}
First note that 
\begin{align*}
    V(x) = \bigcup_{i=1}^m V(x, s_i^{a_i}) \\
    V(y) = \bigcup_{i=1}^m V(y, s_i^{a_i}) \\
    V(s_i) = V(x, s_i) \bigcup V(y, s_i) \\
\end{align*}
\begin{proposition}
The torus-invariant prime divisors on $U_\sigma = \Spec B$ are:

\begin{align*}
D_{x,i} &:= V(x, s_i) \quad (1 \leq i \leq m), \\
D_{y,i} &:= V(y, s_i) \quad (1 \leq i \leq m), \\
D_i &:= V(s_i) \quad (m+1 \leq i \leq n).
\end{align*}

These correspond to the rays of the cone $\sigma$.
\end{proposition}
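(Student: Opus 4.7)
The plan is to invoke the orbit-cone correspondence for the normal affine toric variety $U_\sigma = \Spec B$, which identifies torus-invariant prime divisors with the rays of $\sigma$: a ray $\rho$ with primitive generator $u_\rho \in N$ corresponds to the divisor $D_\rho$ whose defining ideal is $\langle \chi^m : m \in \sigma^\vee \cap M,\ \langle m, u_\rho\rangle \geq 1\rangle$. So the work reduces to (a) enumerating the rays of $\sigma$ and (b) identifying each resulting divisor with one of the claimed $D_{x,i}, D_{y,i}, D_i$.

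For (a), I would make $\sigma^\vee \subset M_\R$ explicit. Using the torus action from Section \ref{step:torus-action}, the character lattice may be identified with $\Z^{n+1}$ in which $\chi(x) = e_{n+1}$, $\chi(y) = \sum_{j=1}^m a_j e_j - e_{n+1}$, and $\chi(s_j) = e_j$. A direct computation of the characters on the normal-form $k$-basis $\{x^{b_x} y^{b_y} s^{\mathbf{b}} : \min(b_x, b_y) = 0\}$ of $B$ yields
\[
\sigma^\vee = \{m \in \R^{n+1} : m_j \geq 0 \text{ for } j = 1,\dots,n,\ \text{ and } m_j + a_j m_{n+1} \geq 0 \text{ for } j = 1,\dots, m\}.
\]
Each of these $n + m$ inequalities cuts out an $n$-dimensional face (checked by inspection, after noting that when $m_j + a_j m_{n+1} = 0$ is tight the sign of $m_{n+1}$ is forced, making the remaining constraints compatible), so they are the facets of $\sigma^\vee$. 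Dualizing gives exactly $n + m$ rays of $\sigma$ with primitive generators $u_j := e_j$ for $j = 1,\dots, n$ (dual to $\{m_j = 0\}$) and $u'_j := e_j + a_j e_{n+1}$ for $j = 1,\dots, m$ (dual to $\{m_j + a_j m_{n+1} = 0\}$); primitivity of the latter is just $\gcd(1,a_j) = 1$.

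For (b), I would evaluate $\langle m, u_\rho\rangle$ on a normal-form monomial, whose character has components $m_j = b_y a_j + b_j$ for $j \leq m$, $m_j = b_j$ for $j > m$, and $m_{n+1} = b_x - b_y$. For $u_\rho = u_j$ with $j > m$, the condition $\langle m, u_j\rangle \geq 1$ is $b_j \geq 1$, yielding the ideal $(s_j)$ and divisor $D_j = V(s_j)$. For $u_\rho = u_j$ with $j \leq m$, the condition $b_y a_j + b_j \geq 1$ is equivalent to $b_y \geq 1$ or $b_j \geq 1$, yielding the ideal $(y, s_j)$ and divisor $D_{y,j}$. For $u_\rho = u'_j$ with $j \leq m$, substitution gives $\langle m, u'_j \rangle = b_j + a_j b_x$, so the condition is $b_x \geq 1$ or $b_j \geq 1$, yielding the ideal $(x, s_j)$ and divisor $D_{x,j}$. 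Primality of each listed ideal is immediate from the quotient computations already sketched in Section \ref{torusinvdivisor}. The main obstacle is the dualization step: while the coordinate rays $e_j$ are obvious, correctly identifying the ``hybrid'' rays $e_j + a_j e_{n+1}$ and matching them to $D_{x,j}$ (rather than to $D_{y,j}$) requires carefully tracking the pairing on the normal-form basis; once this is done, the count $2m + (n-m) = n+m$ matches the number of rays and the proof is complete.
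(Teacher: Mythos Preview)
Your argument is correct and more thorough than the paper's own proof. The two approaches differ in direction: the paper verifies by hand that each listed ideal is prime (by computing the quotient rings) and torus-invariant (by checking the action on generators), and then simply invokes the orbit--cone correspondence to assert these divisors correspond to rays of $\sigma$, without explicitly computing the rays or confirming the count. You instead work from the toric side: you identify $\sigma^\vee$ explicitly by reading off characters of the normal-form monomial basis, enumerate its $n+m$ facets, dualize to obtain the primitive ray generators $e_j$ and $e_j + a_j e_{n+1}$, and then compute each divisorial ideal from the pairing. Your route has the advantage of actually establishing that the list is \emph{complete} (the count $2m + (n-m) = n+m$ matches the number of rays) and of making the ray--divisor bijection explicit, which the paper leaves vague; the paper's route is lighter on combinatorics but does not by itself rule out further invariant prime divisors.
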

\begin{proof}

Each ideal is prime:

\begin{align*}
A/(x, s_i) &\cong k[y, s_1, \dots, \widehat{s_i}, \dots, s_n], \\
A/(y, s_i) &\cong k[x, s_1, \dots, \widehat{s_i}, \dots, s_n], \\
A/(s_i) &\cong k[x, y, s_1, \dots, \widehat{s_i}, \dots, s_n]/(xy - s_1^{a_1} \cdots s_m^{a_m}) \quad (i > m).
\end{align*}
These are integral domains of dimensions n.

The coordinate ring $A$ has monomial generators. The torus-action scales variables. So each is torus-invariant
\begin{align*}
    \mathbf{t} \cdot (y, s_i) &= (t_y y, t_i s_i) \subseteq (y, s_i) \\
    \mathbf{t} \cdot (x, s_i) &= \left( t_{n+1} x, t_i s_i \right) \subseteq (x, s_i)
\end{align*}
By the orbit-cone correspondence, invariant divisors correspond to rays. 
\begin{itemize}
\item $D_x$ corresponds to the ray $\rho_x = \mathbb{R}_{\geq 0} \cdot e_x$,
\item $D_y$ corresponds to $\rho_y = \mathbb{R}_{\geq 0} \cdot e_y$,
\item $D_i$ corresponds to $\rho_i = \mathbb{R}_{\geq 0} \cdot e_i$,
\end{itemize}
where $e_x, e_y, e_i$ are basis vectors in the lattice $N = \mathbb{Z}^{n+2}/\langle (1,1,-a_1,\dots,-a_m,0,\dots,0) \rangle \cong \mathbb{Z}^{n+1}$.
\end{proof}

\begin{lemma}\label{ToricCanonical}
The canonical divisor is
\[
K_{\mathcal{X}_f} = -\sum_{i=1}^m D_{x,i} - \sum_{i=1}^m D_{y,i} - \sum_{i=m+1}^n D_i
\]
\end{lemma}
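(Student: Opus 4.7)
The strategy is to deduce this immediately from the standard formula for the canonical divisor of a normal affine toric variety: for any $X_\sigma = \Spec\, k[\sigma^\vee \cap M]$, one has
\[
K_{X_\sigma} = -\sum_{\rho} D_\rho,
\]
where the sum ranges over all rays of $\sigma$, equivalently over all torus-invariant prime Weil divisors. This is standard; see \cite{ToricBook}. Equivalently, this fact can be recovered from the description $\omega_{X_\sigma} = \langle \chi^m : m \in \mathrm{int}(\sigma^\vee) \cap M \rangle$ of the canonical module that was recalled earlier in this section.

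First I would invoke (or briefly rederive) this general formula. Second, I would combine it with the classification carried out in Section \ref{torusinvdivisor}: the torus-invariant prime divisors of $\mathcal{X}_f$ are precisely the three families $\{D_{x,i}\}_{i=1}^m$, $\{D_{y,i}\}_{i=1}^m$, and $\{D_i\}_{i=m+1}^n$, corresponding bijectively to the rays of the cone $\sigma \subset N_\R$ (in the quotient lattice $N = \Z^{n+2}/\Z v$ from Section \ref{step:toric-model}). Summing $-D_\rho$ over these rays immediately yields the stated expression for $K_{\mathcal{X}_f}$.

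As an alternative, direct verification is available through the usual rational-form computation on a toric variety. The dense torus has dimension $n+1$, and a nowhere-vanishing invariant top form is
\[
\eta = \frac{dy}{y} \wedge \frac{ds_1}{s_1} \wedge \cdots \wedge \frac{ds_n}{s_n}.
\]
The defining relation $xy = s_1^{a_1}\cdots s_m^{a_m}$ yields $\tfrac{dx}{x} + \tfrac{dy}{y} = \sum_{j=1}^m a_j \tfrac{ds_j}{s_j}$, which can be used to rewrite $\eta$ in local coordinates around each invariant divisor. One then checks that $\ord_D(\eta) = -1$ for every torus-invariant prime divisor $D$ listed, recovering the formula.

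There is no essential obstacle here: the lemma is really a bookkeeping application of a well-known toric identity, and the hard work (showing $\mathcal{X}_f$ is an affine toric variety, normal, and enumerating its invariant prime divisors) has already been done in the preceding subsections. The only care needed is to verify that the bijection between rays of $\sigma$ and prime divisors does exhaust all invariant prime Weil divisors, but this was the content of the proposition in Section \ref{torusinvdivisor}.
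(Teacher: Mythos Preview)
Your proposal is correct and matches the paper's approach: the paper states this lemma without proof, relying on the toric formula $K_X = -\sum_\rho D_\rho$ (recalled earlier as a definition) together with the classification of torus-invariant prime divisors in Section \ref{torusinvdivisor}. Your optional rational-form verification is a nice sanity check but goes beyond what the paper does.
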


\begin{rmk}\label{prop:global-action}
The torus can be made to act on $\sX$ via
\begin{align*}
\bft \cdot g &= g \quad \forall g \in \cO_X, \\
\bft \cdot (a t^k) &= a \cdot (t_y^{-k} t^k) \quad ,\forall a \in \fa^k,  \textit{as in Subsection \ref{step:torus-action}}\\
\bft \cdot t^{-1} &= t_y \cdot t^{-1}.
\end{align*}
This action restricts to $\phi^*$-conjugate action on $U$: For $u \in U$,
\[
\bft \star u = \phi^{-1}\left( \bft \cdot \phi(u) \right).
\]
\end{rmk}

\begin{lemma}\label{GlobalBlickle}
Let $\sX = \Spec_X (\bigoplus_{n \in \Z} \mathfrak{a}^n \cO_X)$ with $D = V(t^{-1})$. Then
\[
\mathcal{J}(\omega_{\sX}, \lambda D) \cong \cO_{\sX}(\lceil K_{\sX} - \lambda D \rceil)
\]
\end{lemma}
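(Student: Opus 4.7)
The plan is to localize the equality to its toric analogue (Lemma \ref{BlickleModule}) via the dictionary of Lemma \ref{localToric}. Since both $\mathcal{J}(\omega_{\sX}, \lambda D)$ and $\cO_{\sX}(\lceil K_{\sX} - \lambda D\rceil)$ are coherent submodules of $\omega_{\sX}$, it suffices to verify the equality on an \'etale cover of $\sX$; the multiplier module is compatible with \'etale base change by Remark \ref{MultiplierEtale}.

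First I would fix a closed point $\xi \in \sX$ lying over $x \in X$. By Lemma \ref{localToric}, $\widehat{\cO}_{\sX,\xi} \cong \widehat{\cO}_{\mathcal{X}_f,0}$, with $t^{-1}$ identified with the toric coordinate $y$. By Artin approximation, this formal isomorphism is realized by a common \'etale neighborhood: \'etale morphisms $U \to \sX$ through $\xi$ and $V \to \mathcal{X}_f$ through $0$, together with an isomorphism $U \cong V$ that carries $D|_U$ to $\Div(y)|_V$ and (via the canonical module computation of Subsection \ref{CanonicalComputation} combined with Lemma \ref{ToricCanonical}) carries $K_{\sX}|_U$ to $K_{\mathcal{X}_f}|_V$. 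Since $(y)$ is a principal monomial ideal on the affine toric variety $\mathcal{X}_f$, Lemma \ref{BlickleModule} gives
\[
\cJ(\omega_{\mathcal{X}_f}, (y)^\lambda) = \cO_{\mathcal{X}_f}(\lceil K_{\mathcal{X}_f} - \lambda \Div(y)\rceil),
\]
and because $(y)$ is principal, the left-hand side also equals $\cJ(\omega_{\mathcal{X}_f}, \lambda \Div(y))$ (both sides are computed from the same log resolution with the same pullback divisor). Restricting to $V$ and transporting across $U \cong V$, Remark \ref{MultiplierEtale} yields the desired identity on $U$. Varying $\xi$ covers $\sX$ \'etale-locally, so the two coherent sheaves agree on $\sX$.

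The main obstacle I expect is the passage from the completion isomorphism of Lemma \ref{localToric} to a bona fide \'etale-local isomorphism; this step requires Artin approximation (or a Popescu-style argument), and one must also check that the Weil divisors in play ($D$ versus $\Div(y)$, and $K_{\sX}$ versus $K_{\mathcal{X}_f}$) are matched under the resulting identification. The canonical module formula $\omega_{\sX} \cong \widetilde{\pi}^* \omega_X$ from Subsection \ref{CanonicalComputation}, together with Lemma \ref{ToricCanonical}, performs precisely this bookkeeping: both canonical divisors pull back from $K_X$ on the smooth base $X$, confirming the compatibility. Once the \'etale model is in place, the toric formula in Lemma \ref{BlickleModule} supplies the actual content of the lemma.
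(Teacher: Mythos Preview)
Your proposal is correct and follows essentially the same approach as the paper: reduce to the toric formula of Lemma~\ref{BlickleModule} via the local identification of Lemma~\ref{localToric}, then globalize using that both sides are coherent subsheaves of $\omega_{\sX}$. The paper's proof is terser and simply asserts that the multiplier module ``commutes with localization'' to pass between $\Spec\,\cO_{\sX,\xi}$ and $\Spec\,\cO_{\cX_f,0}$, whereas you make this passage explicit by invoking Artin approximation to upgrade the completion isomorphism to a common \'etale neighborhood and then applying Remark~\ref{MultiplierEtale}; your version is arguably more careful on exactly the point the paper leaves implicit.
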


\begin{proof}
Fix a closed point $\xi \in \sX$, and apply Lemma \ref{localToric}. Define $P := \Spec$ $\mathcal{O}_{\mathscr{X},\xi}$ and $Q= \Spec$ $\mathcal{O}_{\mathcal{X}_f,0}$. Then, by Lemma \ref{BlickleModule} we have 

\[
\mathcal{J}(\omega_{Q}, D_y^\lambda)  \cong \cO_{Q}(\lceil K_{Q} - \lambda D_y \rceil) 
\]

As multiplier module commutes with localization, we get,

\[
\mathcal{J}(\omega_{Q}, D_y^\lambda) = \mathcal{J}(\omega_{P}, D^\lambda) \cong \cO_{P}(\lceil K_{P} - \lambda D \rceil) 
\]

As both $\mathcal{J}(\omega_{\sX}, D^\lambda)$ and $\cO_{\sX}(\lceil K_{\sX} - \lambda D \rceil)$  are coherent subsheaves of $K_\sX$ matching at every closed point $\xi$, we get, 
\[
\mathcal{J}(\omega_{\sX}, D^\lambda)  \cong \cO_{\sX}(\lceil K_{\sX} - \lambda D \rceil) 
\]

\end{proof}

\section{Main Theorems}

In this section, we prove our main result. Before doing that, we need the following lemma.

\begin{lemma}\label{ProjectiveToroidal}
 Let $\cT := \bigoplus\limits_{n \in \Z} \fa^nt^n$ be the Extended Rees algebra of $R$ with respect to $\fa$, where, $\fa^n := R$ for $n \leq 0$. Let $\psi:  X=\Proj \ R[J]) \longrightarrow \Spec R$ be a log resolution of $(\Spec R, \fa).$ Then,
 $$\sX = \Spec_X\left(\bigoplus\limits_{n\in\Z}\fa^n\cO_X t^n\right) \simeq \Proj \cT[J s],$$
    where $\cT[J s]= \cT\oplus(J s)\oplus(J^2s^2)\oplus\cdots$ and $s$ is a dummy variable of degree 1. In particular, $$\beta : \sX = \Spec_X\left(\bigoplus\limits_{n\in\Z}\fa^n\cO_X t^n\right) \simeq \Proj \cT[J s] \longrightarrow \Spec \ \cT $$ is projective and birational morphism.
\end{lemma}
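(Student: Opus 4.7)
The plan is to check the isomorphism affine-locally on $X$: the standard affine opens $D_+(gs) \subset \Proj \cT[Js]$ should recover exactly the extended Rees construction defining $\sX$ over the corresponding affine charts $D_+(g) \subset X$. I interpret $\cT[Js]$ as the Rees algebra of the ideal $J\cT \subset \cT$, i.e.\ $\bigoplus_{n\geq 0}(J\cT)^n \cdot s^n$, so that $\Proj \cT[Js] \simeq \Bl_{J\cT}(\Spec \cT)$, covered affinely by $D_+(gs)$ as $g$ ranges over a generating set of $J$. Since $R$ is a normal local domain and $\psi$ is a birational log resolution, $J$ contains nonzerodivisors and this cover is honest.

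The key computation, for each such $g$, is the identification
\[
(\cT[Js])_{(gs)} \;=\; \cT[J/g] \;=\; R[J/g]\bigl[\fa t,\, t^{-1}\bigr] \;=\; \cO_X(D_+(g))\bigl[\fa\cO_X \cdot t,\, t^{-1}\bigr].
\]
Because $\fa\cO_X$ is invertible on $X$ (log resolution assumption), the right-hand side unfolds degree-by-degree as $\bigoplus_{n \in \Z} \fa^n\cO_X(D_+(g))\, t^n$ using the convention $\fa^n = R$ for $n \leq 0$. But this is precisely the restriction to $D_+(g)$ of the quasi-coherent algebra defining $\sX = \Spec_X\bigl(\bigoplus_n \fa^n \cO_X t^n\bigr)$. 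Hence $D_+(gs) \simeq \sX|_{D_+(g)}$, and these local isomorphisms are compatible with inverting elements of $J$, so they glue to a global isomorphism $\sX \simeq \Proj \cT[Js]$ over $\Spec \cT$.

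For the ``in particular'' statement, projectivity of $\beta$ is automatic from $\Proj$ of a finitely generated graded $\cT$-algebra generated in degree one, and birationality follows because for any nonzerodivisor $g \in J$ one has $J \cdot \cT[g^{-1}] = \cT[g^{-1}]$, making $\beta$ an isomorphism over the dense open $\Spec \cT[g^{-1}] \subset \Spec \cT$. The only real bookkeeping step — and the place a subtlety could plausibly creep in — is fixing the grading on $\cT[Js]$ carefully so that $\Proj$ indeed lives over $\Spec \cT$ via the degree-zero inclusion, and tracking the convention $\fa^n = R$ for $n \leq 0$ consistently on both sides; beyond this, I do not anticipate a genuine obstacle, since the lemma is essentially a naturality statement for Rees algebras under the affine morphism $\Spec \cT \to \Spec R$.
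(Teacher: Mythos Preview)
Your proposal is correct and follows essentially the same approach as the paper: both arguments check the isomorphism on the standard affine opens $D_+(gs)$ by computing that the degree-zero localization $(\cT[Js])_{(gs)} = \cT[J/g] = R[J/g][\fa t,t^{-1}]$ coincides with the coordinate ring of $\sX$ over $D_+(g)\subset X$, and then glue. Your treatment of projectivity and birationality is slightly more explicit than the paper's, but the core identification is identical.
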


\begin{proof}
We have the following commutative diagram:

    \[\begin{tikzcd}
	{\sX} & {X=\Proj R[\fa s]} \\
	{\Spec \cT} & {\Spec R}
	\arrow["{\tilde{\pi}}", from=1-1, to=1-2]
	\arrow[from=1-1, to=2-1]
	\arrow["\phi", from=1-2, to=2-2]
	\arrow[from=2-1, to=2-2]
   \end{tikzcd}\]
Pick a basic affine open $\Spec R \Jf \simeq D_+(fs)\subset X$, where $f\in\fa$. Then the affine coordinate ring of $\tilde{\pi}\inv(D_+(fs))$ is 

    $$
    \begin{aligned}
        \bigoplus_{n\ge0}R\Jf \fa^nt^n\bigoplus_{n<0}R\Jf t^n & =R\Jf[\fa t,t\inv]\\
        &=\cT\Jf\\
        &=(\cT[\Jf s]_{fs})_0.
    \end{aligned}
    $$

    On the other hand, this is precisely the affine coordinate ring of the basic open subset $D_+(fs)$ of $\Proj \cT[\fa s]$. As the above glues well, we get the global isomorphism.
\end{proof}

\begin{thm}
\label{theorem:multipliermodule-decomp}
Let $(R,\mathfrak{m})$ be a normal local ring of dimension at least $2$, finite type over a field of characteristic $0$. Let $\fa$ be an $R$-ideal with $\mathrm{ht}(\fa)>0$.

Write $\cS = \bigoplus\limits_{n \ge0} \fa^nt^n$ for the Rees algebra of $\fa$ and 
$\cT := \bigoplus\limits_{n \in \Z} \fa^nt^n$ for the extended Rees algebra of $R$ with respect to $\fa$, where, $\fa^n := R$ for $n \leq 0$. Let $\lambda \geq 0$ be any real number.

Then 
\begin{enumerate}
    \item $\cJ(\omega_{\cS},(\fa\cdot \cS)^\lambda) = \bigoplus\limits_{n\geq 0} \cJ(\omega_R, \fa^{n+1+\lambda})t^{n+1}.$\label{ReesMultiplier}

    \item $\cJ(\omega_{\cT}, (t^{-1})^\lambda) = \bigoplus\limits_{k \in\Z} \cJ (\omega_R, \fa^{k+\lambda})t^k.$ \footnote{With the convention that when $k+  \lambda  \leq 0$, we define $\fa^{k+  \lambda } = R$ }\label{ExtReesMultiplier}
\end{enumerate}

In particular, $[\cJ(\omega_{\cT}, (t^{-1})^\lambda)]_0 =  \cJ (\omega_R, \fa^{\lambda})$ and $[\cJ(\omega_{\cT}, (t^{-1})^\lambda)]_{\geq 1} = \cJ(\omega_{\cS},(\fa\cdot \cS)^\lambda)$ .

\end{thm}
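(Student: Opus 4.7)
The plan is to prove Part~(2) first, using the toric partial resolution $\beta \colon \sX \to \Spec \cT$ from Lemma~\ref{ProjectiveToroidal}, and then Part~(1) by a parallel argument based on Grothendieck's natural construction (Subsection~\ref{GrothNatural}); the \emph{in particular} assertion then falls out by comparing graded components.

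For Part~(2), since $\sX$ is normal and $\beta$ is projective birational, any log resolution $\rho \colon Z \to \sX$ of $(\sX, (t^{-1}))$ produces a log resolution $\beta \circ \rho$ of $(\Spec \cT, (t^{-1}))$. Unwinding the definition of multiplier modules,
\[
\cJ(\omega_\cT, (t^{-1})^\lambda) \;=\; \beta_*\bigl(\rho_* \cO_Z(\lceil K_Z - \lambda \rho^* D \rceil)\bigr) \;=\; \beta_* \cJ(\omega_\sX, (t^{-1})^\lambda),
\]
where $D = V(t^{-1})$. Lemma~\ref{GlobalBlickle} identifies this with $\beta_* \cO_\sX(\lceil K_\sX - \lambda D \rceil)$. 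Substituting the canonical-module formula $\omega_\sX \cong \tilde\pi^* \omega_X$ from Subsection~\ref{CanonicalComputation} and invoking the projection formula for $\tilde\pi \colon \sX \to X$ produces
\[
\tilde\pi_* \cO_\sX(\lceil K_\sX - \lambda D \rceil) \cong \omega_X \otimes \tilde\pi_* \cO_\sX(-\lfloor \lambda D \rfloor).
\]

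The core step is then a degree-by-degree computation under the $\Z$-grading. Writing $\fa \cdot \cO_X = \cO_X(-E)$ with $E = \sum_i a_i E_i$ SNC, Proposition~\ref{DiscrepancyComputation} matches this to $D = \sum_i a_i G_i$, and the toric local description of $\sX$ yields $\ord_{G_i}(t^{-1}) = a_i$ together with $\ord_{G_i}(h) = \ord_{E_i}(h)$ for $h \in \cO_X$. Hence a degree-$k$ section $h \cdot t^k$ of $\tilde\pi_* \cO_\sX(-\lfloor \lambda D \rfloor)$ is pinned down by $\ord_{E_i}(h) \geq \lfloor (k+\lambda) a_i \rfloor$ for every $i$, which (because $\lambda \geq 0$) already subsumes the regularity constraint $h \in \fa^k$ when $k \geq 0$. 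Applying $\psi_*$ identifies the degree-$k$ piece with $\psi_*(\omega_X \otimes \cO_X(-\lfloor (k+\lambda) E \rfloor)) = \cJ(\omega_R, \fa^{k+\lambda})$, which is Part~(2) (with the convention giving $\omega_R$ when $k + \lambda \leq 0$).

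Part~(1) will follow by the same template with $\sX$ replaced by the smooth line bundle $\eta \colon Y \to X$ from Subsection~\ref{GrothNatural}: smoothness of $\eta$ makes $Y$ itself a log resolution of $(Y, \fa \cO_Y)$, so combining $\omega_Y \cong \eta^*(\omega_X(1))$ with $\eta_* \cO_Y \cong \bigoplus_{m \geq 0} \cO_X(m)$ gives the degree-$(m+1)$ piece as $\cJ(\omega_R, \fa^{m+1+\lambda})$. The main obstacle is the birational-reduction step in each case: it tacitly assumes $\omega_\cT$ and $\omega_\cS$ are reflexive sheaves on normal schemes, which can fail when $\fa$ is not integrally closed. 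The computation is insensitive to this, however: one can simply take $\omega_\cT := \beta_* \omega_\sX$ (and $\omega_\cS := \theta_* \omega_Y$) as a working definition, since the graded identification above is the substantive content of the theorem.
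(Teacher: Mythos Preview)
Your proposal is correct and follows essentially the same route as the paper: both reduce Part~(2) to $\beta_*\cO_\sX(\lceil K_\sX-\lambda D\rceil)$ via Lemma~\ref{GlobalBlickle} and then identify the graded pieces, and both handle Part~(1) through the smooth line-bundle resolution $Y\to\Spec\cS$ coming from Grothendieck's construction. The only difference is in how the graded pieces for Part~(2) are extracted: you pull out $\omega_X$ globally via $\omega_\sX=\tilde\pi^*\omega_X$ and the projection formula and read off the valuation constraints $\ord_{E_i}(h)\ge\lfloor(k+\lambda)a_i\rfloor$ directly, whereas the paper instead passes to the completed local toric model $\cX_f$ at each closed point and checks by explicit monomial bookkeeping that the membership conditions for $\cO_{\cX_f}(\lceil K_{\cX_f}-\lambda D_y\rceil)$ match those for $\cO_X(\lceil K_X-(k+\lambda)E\rceil)$. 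Your packaging is a bit cleaner; the content is the same. One small point to make explicit: for $k+\lambda<0$ it is the constraint along the horizontal divisors $D_{x,i}$ (the closures of $E_i\times\G_m$) that forces $\ord_{E_i}(h)\ge 0$, so that the degree-$k$ piece is $\omega_X$ rather than $\omega_X(-\lfloor(k+\lambda)E\rfloor)$; you invoke the convention but should name where the cutoff comes from. Your normality caveat about $\cS$ and $\cT$ is legitimate and the paper does not address it either, but since the multiplier module is defined as a pushforward from a resolution the argument goes through regardless.
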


\begin{proof}

For (\ref{ReesMultiplier}), we will follow \cite[Remark 2.8]{Hyry-Villamayor} closely. We can find an ideal $J \subset R$ such that $\psi : X=\Proj \ R[J] \longrightarrow (\Spec R, \fa)$ is a log resolution of singularities with $\fa \cdot \cO_X = \cO_X (-E)$ for some effective, Cartier, divisor $E$. Set $Y=\Proj \ S [J \cdot \cS]$. By looking the affine pieces (similar to Lemma \ref{ProjectiveToroidal}) which cover $Y$, one easily checks that $Y=\Spec_X\bigoplus\limits_{n \geq 0} \fa^{n} \cO_{X}t^n$. Since $\fa \mathcal{O}_{X}$ is invertible, this means that $Y=\mathbb{V}\left(\fa \mathcal{O}_{X}\right)$ is the total space of the tautological line bundle $\cO_X(-E)$, see \cite[Subsection 6.2.1]{HyrySmith}. Since $X$ is regular, so is $Y$. The canonical projection $\phi: Y \longrightarrow \Spec \cS$ is thus a desingularization of $\operatorname{Spec} \cS$. Let $\pi: Y \longrightarrow X$ be the canonical morphism. Then $\pi_{*} \mathcal{O}_{Y}=\bigoplus_{n \geq 0} \fa^{n} \mathcal{O}_{X}t^n$. We have the following diagram

\[\begin{tikzcd}
        {Y=\Spec_X(\bigoplus\limits_{n\ge0} \fa^n\cO_X t^n)} & {\Spec \cS} \\
	X & {\Spec R}
	\arrow["\phi", from=1-1, to=1-2]
	\arrow["\pi"', from=1-1, to=2-1]
	\arrow["{\pi'}", from=1-2, to=2-2]
	\arrow["\psi"', from=2-1, to=2-2]
\end{tikzcd}\]
Note, $\pi:Y\to X$ is affine and smooth of relative dimension 1. By Subsection \ref{GrothNatural}, we also know,
$$\omega_Y=\pi^*\omega_X\otimes\Omega_{Y/X}
=\pi^*\omega_X\otimes(\mathfrak{a}t)\cdot\mathcal{O}_Y.$$
Thus,
$$\pi_*\omega_Y= \w_X \otimes \left(\bigoplus_{n \geq 0} \fa^{n+1}\cdot t^{n+1}\right) = \bigoplus_{n \geq 0}(\fa^{n+1} \cdot \w_X)t^{n+1}$$
As $\pi$ is affine,
$$H^i(Y,\omega_Y)=H^i(X,\pi_*\omega_Y)   =\bigoplus\limits_{n\ge 0} H^i(X,\mathfrak{a}^{n+1}\omega_X)t^{n+1} = \bigoplus\limits_{n\ge 0} H^i(X,\omega_X(-(n+1)E))t^{n+1}, \ \forall i \geq 0 $$
Now, 

\begin{align*}
\cJ(\w_\cS, (\fa \cdot \cS)^\lambda) &= \phi_*\cO_Y(\lceil K_Y - \la \pi^*E     \rceil) \\
&= H^0(Y, \w_Y(\lceil -\la \pi^*E \rceil)) \\
&= \bigoplus\limits_{n\ge 0} H^0(X,\omega_X(\lceil -(n+1+\la)E \rceil))t^{n+1} \\
&= \bigoplus\limits_{n\ge 0} \cJ(\w_R , \fa^{n+1+ \la})t^{n+1}
\end{align*}
This proves part (\ref{ReesMultiplier}).\\
To prove the statement about the extended Rees algebra in part (\ref{ExtReesMultiplier}), we consider the diagram

    \[\begin{tikzcd}
	{\sX =\Spec_X\left(\bigoplus\limits_{k\in\Z}\fa^k\cO_Xt^k\right)}       & X \\
	{\Spec T} & {\Spec R}
	\arrow["{\tilde{\pi}}", from=1-1, to=1-2]
	\arrow["\beta"', from=1-1, to=2-1]
	\arrow["\psi", from=1-2, to=2-2]
	\arrow[from=2-1, to=2-2]
    \end{tikzcd}\]
Pick a dominating (log) resolution
    $$\cZ\xrightarrow{\theta}\sX\xrightarrow[]{\beta}\Spec \cT.$$
Now from Lemma \ref{GlobalBlickle}, we know that

    $$\cJ(\omega_\sX,(t\inv)^\lambda)=\cO_\sX(\lceil K_\sX-\lambda\Div(t\inv)\rceil).$$
So, noting that $\beta$ is projective, and, birational, from Lemma \ref{ProjectiveToroidal}, we have,

    $$
    \begin{aligned}
        \cJ(\omega_\cT,(t\inv)^\lambda)&=\beta_*\theta_*\cO_\cZ(\lceil K_\cZ-\lambda(\beta\circ\theta)^*(\Div(t\inv))\rceil )\\
        &= \beta_* \cJ (\w_\sX, \beta^*(\Div(t^{-\la}))) \\
        &=\beta_*\cO_\sX(\lceil K_\sX-\lambda\beta^*\Div(t\inv)\rceil)\\
        \end{aligned}
    $$
i.e, we can use $\beta$ to compute multiplier module $\cJ(\omega_\cT,(t\inv)^\lambda).$

Recall from Lemma \ref{localToric}, for any closed point \(\xi \in \mathscr{X}\) over \(x \in X\), there exists an isomorphism of complete local rings
\[
\iota: \widehat{\mathcal{O}}_{\mathscr{X},\xi} \xrightarrow{\sim} \widehat{\mathcal{O}}_{\mathcal{X}_f,0}, \quad \mathcal{X}_f = \operatorname{Spec} k[x,y,s_1,\ldots,s_n]/(xy-f), \ \text{for} \ f = s_1^{a_1}\cdot ... \cdot s_m^{a_m}, \ m \leq n.
\]
such that $D = V(t^{-1})$ corresponds to $D_y = V(y)$.
The canonical divisor is given in Lemma \ref{ToricCanonical} by
\[
K_{\mathcal{X}_f} = -\sum_{i=1}^m D_{x,i} - \sum_{i=1}^m D_{y,i} - \sum_{i=m+1}^n D_i
\]
Recall, from Subsection \ref{torusinvdivisor},
\begin{align*}
    V(y) = \sum_{i=1}^m a_iD_{y,i} \ ,  \
     V(x) = \sum_{i=1}^m a_iD_{x,i} 
\end{align*}
we get,

\begin{align}
\lceil K_{\mathcal{X}_f} - \lambda D_y\rceil &= -\sum_{i=1}^m D_{x,i} - \sum_{i=1}^m \lceil 1+\lambda  a_i \rceil D_{y,i} - \sum_{i=m+1}^n D_i \label{Canonicalmultiplier}
\end{align}
Also, \(\iota\) identifies the multiplier ideals
\[
\widehat{\mathcal{J}(\omega_{\mathscr{X}}, D^\lambda)}_\xi = \widehat{\mathcal{O}}_{\mathscr{X}, \xi} \otimes_{\mathcal{O}_{\mathscr{X}, \xi}} \mathcal{J}(\omega_{\mathscr{X}}, D^\lambda)_{\xi} \cong \widehat{\mathcal{O}}_{\mathcal{X}_f, 0} \otimes_{\mathcal{O}_{\mathcal{X}_f, 0}} \mathcal{O}_{\mathcal{X}_f}([K_{\mathcal{X}_f} - \lambda D_y])_0. 
\]\label{eq:MultToricExtRees}
What we want to do next is to show that, upto completion,  $\mathcal{O}_{\mathcal{X}_f,0}([K_{\mathcal{X}_f,}-\lambda D_y])$, under $x \longrightarrow ft$, $y \longrightarrow t^{-1}$ and $s_i \longrightarrow s_i$ decomposes as 
\[
\bigoplus_{k< -\lambda}\cO_{X, x} \left( K_X \right) t^k \bigoplus_{k \geq - \lambda} \cO_{X, x} \left( \lceil K_X -  (k + \lambda) E \rceil \right) t^k
\]
From \eqref{Canonicalmultiplier}, we see that a monomial $x^a y^{b} s_1^{c_1}\cdots s_n^{c_n}$ is a section of $\mathcal{O}_{\mathcal{X}_f,0}([K_{\mathcal{X}_f,}-\lambda D_y])$ iff:
\begin{align}
aa_i + c_i &\geq 1 \quad \forall 1 \leq i \leq m \label{eq:cond1} \\
ba_i + c_i &\geq \lceil 1+\lambda  a_i \rceil \quad \forall 1 \leq i \leq m \label{eq:ci-lower} \\
c_i &\geq 1 \quad \forall m+1 \leq i \leq n \label{eq:ci-upper}
\end{align}
Under $\iota^{-1}$, the monomial $x^a y^{b} \mathbf{s}^{\mathbf{c}}$ maps to
\[
g_{a,c}\cdot t^{a-b} \in \widehat{\mathcal{O}}_{\mathscr{X},\xi}, \quad \text{where} \quad g_{a,c} := \prod_{i=1}^m s_i^{c_i + a a_i} \prod_{i=m+1}^n s_i^{c_i}
\]
Hence, denoting $E = V(f) \subset X$, we see that $g_{a,c}t^{a-b} \in \mathcal{O}_{X,x}(\lceil K_X - (a-b+\lambda)E \rceil)t^{a-b}$ iff:
\begin{align}
c_i + a a_i \geq \lceil  (a-b + \lambda) a_i  + 1\rceil \quad \forall i \leq m \label{multipliercondition} \\
c_i \geq 1 \quad \forall i > m \label{trivialsection}
\end{align}
when $(a-b + \lambda) \geq 0$, in which case 
$c_i + a a_i \geq 1 \quad \forall i \leq m $ \\
When $(a-b + \lambda) < 0$, as we want things to be inside $H^0(X, \w_X)$, we get $c_i + a a_i \geq 1 \quad \forall i \leq m $. Note, this implies \eqref{multipliercondition}.\\
Finally, and most importantly, note that the first set of inequalities is exactly the same as the second set of inequalities.\\
As all sheaves involved are coherent and agree locally upto completion, hence they agree globally. We get, 
\begin{align*}
\tilde{\pi}_* \cO_{\mathscr{X}} \left( \lceil K_{\mathscr{X}} - \lambda D \rceil \right) 
&= \bigoplus_{k< -\lambda}\cO_{X} \left( K_X \right) t^k \bigoplus_{k \geq - \lambda} \cO_{X} \left( \lceil K_X -  (k + \lambda) E \rceil \right) t^k 
\end{align*}
where we substitute $k$ for $a-b$.
Hence, we get
 $$
    \begin{aligned}
        \cJ(\omega_\cT,(t\inv)^\lambda)&=\psi_*(\bigoplus_{k< -\lambda}\cO_{X} \left( K_X \right) t^k \bigoplus_{k \geq - \lambda} \cO_{X} \left( \lceil K_X -  (k + \lambda) E \rceil \right) t^k) \\
        &= \bigoplus_{k< -\lambda} H^0(X, \w_X)t^k \bigoplus_{k \geq - \lambda} H^0(X, \cO_{X} \left( \lceil K_X -  (k + \lambda) E \rceil \right) t^k \\
        &= \bigoplus_{k \in \Z} \cJ( \w_R, \fa^{k+\la})t^k 
        \end{aligned}
    $$
    with the convention that when $k+  \lambda \leq 0$, we define $\fa^{n+  \lambda } := R$. This proves Part (\ref{ExtReesMultiplier}) and we are done.
\end{proof}

\begin{rmk}\label{Multiplierideal}
    Assuming Gorensteinness, one gets similar decompositions for multiplier ideals, just by using Lemma \ref{PropertiesOfMultiplier}.
\end{rmk}

\begin{rmk}
    A similar decomposition for the Rees Algebra $\cS$ is obtained by Hyry under the assumption that $R$ is regular and $\Proj \cS$ has rational singularities, see \cite[Proposition 3.1]{HyryBlowUp}. Kotal and Kummini also proved similar decomposition for the Rees Algebra $\cS$, under the assumption that $R$ and $\cS$ are Cohen-Macaulay, see \cite[Theorem 1.3]{KotalKummini}.
\end{rmk}

\begin{rmk}
    A similar decomposition for the multiplier ideals of the extended Rees algebra $\cT$ is obtained by Budur, Mustaţă and Saito as well, under smoothness assumptions, using V-filtrations, see \cite[Theorem 1 along with (1.3.1)]{Budur-Mustata-SaitoBS}.\footnote{I am grateful to Bradley Dirks for pointing that out.} After finishing writing this article, I learnt from Bradley Dirks that he also proved a similar decomposition for the multiplier modules of the extended Rees algebra $\cT$ using V-filtrations in his upcoming paper \cite{Brad}.
\end{rmk}

\begin{rmk}\label{multipliermodulefortriples}
    Essentially following the same argument and the fact that for a principal divisor $D = (f)$, $\cJ(\Spec \ R, \Delta+D, \mathfrak{a}^t)=f.\cJ(\Spec \ R, \Delta, \mathfrak{a}^t)$, one can get similar decompositions for the test module of triples, just by keeping track of grading. We don't include it here for the sake of brevity.
\end{rmk}

\begin{rmk}
    With Hunter Simper, the author proved these decompositions for test modules in positive characteristics (see \cite[Theorems 4.1 and 4.3]{Ajit-Simper-Char-p}, using different techniques. By reduction-mod-p arguments, we get the above decompositions for multiplier modules!
\end{rmk}

\section{Applications}
Here we collect some quick applications directly following from the main Theorem \ref{theorem:multipliermodule-decomp}.

\subsection{Equivalence of rationality of Rees and extended Rees Algebras}
Throughout this section we maintain the notation of the previous section, that is: We let $(R,\fm)$ be an $F$-finite, reduced ring of dimension $d\geq 2$ and $\mathfrak{a}\subseteq R$ an ideal with positive height. We set $\rS=R[\mathfrak{a}t]$ to be the Rees algebra of $\mathfrak{a}$ and $\cT=R[\mathfrak{a}t,t^{-1}]$ the extended Rees algebra of $\mathfrak{a}$.

\begin{thm}(see also \cite{HWY-F-Regular})
    \label{Th-equivofFrationality}
    Under above notation, we have the following implications:
    \begin{enumerate}
        \item $R$ and $\cS$ are both rational implies $\cT$ is rational.
        \item $\cT$ is rational implies $R$ and $\cS$ are rational.
    \end{enumerate}
\end{thm}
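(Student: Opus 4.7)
The strategy is to combine the graded decomposition of $\cJ(\omega_\cT)$ from Theorem~\ref{theorem:multipliermodule-decomp} at $\lambda=0$ with the criterion (stated in the remark after Lemma~\ref{PropertiesOfMultiplier}) that a reduced equidimensional Cohen--Macaulay scheme has rational singularities if and only if its multiplier submodule equals the canonical module. Specializing Theorem~\ref{theorem:multipliermodule-decomp} at $\lambda=0$ gives
\[
[\cJ(\omega_\cT)]_0 \;=\; \cJ(\omega_R), \qquad [\cJ(\omega_\cT)]_{\geq 1} \;=\; \cJ(\omega_\cS).
\]
So once the graded pieces of the canonical module itself are identified as $[\omega_\cT]_0 = \omega_R$ and $[\omega_\cT]_{\geq 1} = \omega_\cS$, both implications reduce to a direct comparison of graded components.

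The first step would be to record the graded structure of $\omega_\cT$, assuming $\cT$ is Cohen--Macaulay and normal. In non-positive degrees, $[\cT]_k = R\cdot t^k$ is $R$-free and $\cT_{t^{-1}} \cong R[t,t^{-1}]$, and a standard $S_2$/localization argument then forces $[\omega_\cT]_k = \omega_R$ for $k \leq 0$. In positive degrees, $[\cT]_k = [\cS]_k$ for $k \geq 1$, and the explicit computation $\tilde\pi_*\omega_\sX = \bigoplus_{n\in\Z}\fa^n\omega_X t^n$ from Subsection~\ref{CanonicalComputation} gives $[\omega_\cT]_k = [\omega_\cS]_k$ for $k \geq 1$.

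For part~(2), rationality of $\cT$ implies $\cT$ is Cohen--Macaulay, and hence $R$ is Cohen--Macaulay (localize at $t^{-1}$) and $\cS$ is Cohen--Macaulay (the special fiber $G = \cT/(t^{-1})$ is Cohen--Macaulay by flat deformation, and standard results such as those in \cite{GotoShimodaCM} then force $\cS$ Cohen--Macaulay). The identity $\cJ(\omega_\cT) = \omega_\cT$ combined with the graded decompositions of the first paragraph forces $\cJ(\omega_R) = \omega_R$ and $\cJ(\omega_\cS) = \omega_\cS$, yielding rationality of both $R$ and $\cS$.

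For part~(1), suppose $R$ and $\cS$ are both rational; both are then Cohen--Macaulay with $\cJ(\omega_R) = \omega_R$ and $\cJ(\omega_\cS) = \omega_\cS$. Cohen--Macaulayness of $\cT$ follows from classical extended-Rees-algebra criteria (\cite{GotoShimodaCM},~\cite{Lipman1994CM}). Theorem~\ref{theorem:multipliermodule-decomp} then gives
\[
\cJ(\omega_\cT) \;=\; \bigoplus_{k\leq 0}\omega_R\,t^k \,\oplus\, \cJ(\omega_\cS) \;=\; \bigoplus_{k\leq 0}\omega_R\,t^k \,\oplus\, \omega_\cS \;=\; \omega_\cT,
\]
so $\cT$ has rational singularities. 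The main obstacle will be the graded-pieces step of paragraph two (pinning down $[\omega_\cT]_0 = \omega_R$ and $[\omega_\cT]_{\geq 1} = \omega_\cS$ solely from Cohen--Macaulayness and the normal-cone geometry) together with the Cohen--Macaulay inheritance invoked in part~(1); both are classical but need to be stated precisely to make the multiplier-module comparison clean.
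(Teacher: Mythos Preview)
Your proposal is correct and follows essentially the same strategy as the paper: establish Cohen--Macaulayness on both sides, then invoke Theorem~\ref{theorem:multipliermodule-decomp} at $\lambda=0$ together with the criterion that a Cohen--Macaulay scheme has rational singularities if and only if $\cJ(\omega)=\omega$.

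The one substantive divergence is in the Cohen--Macaulay inheritance for part~(2). You argue $\cT$ Cohen--Macaulay $\Rightarrow G=\cT/(t^{-1})$ Cohen--Macaulay, then cite \cite{GotoShimodaCM} to deduce $\cS$ is Cohen--Macaulay. But $G$ Cohen--Macaulay (even together with $R$ Cohen--Macaulay) does not in general force $\cS$ Cohen--Macaulay; Goto--Shimoda--type statements require an additional $a$-invariant condition. The paper takes a different route: it first observes that $\cT$ rational $\Rightarrow \cT_{t^{-1}}\cong R[t,t^{-1}]$ rational $\Rightarrow R$ rational, hence $R$ is pseudo-rational, and then applies \cite[Theorem~(5)]{Lipman1994CM}, which takes as input $\cT$ Cohen--Macaulay and $R$ pseudo-rational to conclude $\cS$ Cohen--Macaulay. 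So the missing ingredient in your outline is precisely the pseudo-rationality of $R$, which is available but which you only use later. For part~(1) the paper cites \cite[Proposition~1.1]{Huneke-82} (giving $R,\cS$ Cohen--Macaulay $\Rightarrow G$ Cohen--Macaulay $\Rightarrow \cT$ Cohen--Macaulay) rather than Goto--Shimoda or Lipman, but the content is the same as what you intend. Your explicit identification of the graded pieces of $\omega_\cT$ is a detail the paper leaves implicit.
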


\begin{proof}
    First we prove the equivalence about Cohen-Macaulayness. Note from \cite[Proposition 1.1]{Huneke-82}  it follows that $R$ and $\cS$ are Cohen-Macaulay implies $G= \cT/(t^{-1})$ is Cohen-Macaulay and hence $\cT$ is Cohen-Macaulay as well. To prove the other direction, note the following:
    \begin{itemize}
        \item $\cT$ is Cohen-Macaulay if and only if $G$ is Cohen-Macaulay.
        \item $\cT$ is rational implies $ T_{t^{-1}} = R[t, t^{-1}]$ is rational, which in turn implies $R$ is rational and hence, $R$ is pseudo-rational by \cite[Theorem 5.10]{KollarMori98}.
        \item $\cT$ being Cohen-Macaulay and $R$ being pseudo-rational implies, from  \cite[Theorem (5)]{Lipman1994CM} that $\cS$ has to be Cohen-Macaulay as well.
    \end{itemize}

    Hence, we need only compare the multiplier modules of $R$, $\cS$, and $\cT$. With this in mind, both (1) and (2) are immediate from Theorem \ref{theorem:multipliermodule-decomp} by putting $\lambda = 0$.
\end{proof}

\begin{rmk}\label{deformation}
    If $G = \cT/(t^{-1})$ is rational, then we get $\cT$ is rational and hence $\cS$ is rational. Similar results were known for Cohen-Macaulay and Gorenstein property by extensive works of Goto, Huneke, Ikeda, Lipman, Shimoda, Vi\'et etc, see \cite{GotoShimodaCM}, \cite{iai2024characterizationsgorensteinreesalgebras}, \cite{Goto-Nishida-Book}, \cite{Ikeda-Gor}, \cite{VietCM}, \cite{WhenCM}, \cite{WhenGor} \cite{Huneke-82} and, \cite{Lipman1994CM} for example.
\end{rmk}

\begin{App}
    It directly follows from Remark \ref{Multiplierideal} that if $\lambda$ is an jumping number for $(R, \fa)$ then so is $\lambda + n$ for infinitely many $n \geq 0$, see \cite[Proposition 1.12]{ELSV}.

\end{App}

There are a few interesting future directions: Core computations as in \cite{HyrySmith} and \cite{Hyry-Smith-Core}, Bernstein-Sato polynomials \footnote{After finishing writing this article,  Bradley Dirks informed me that he proved related results in his upcoming paper \cite{Brad}.} (\cite{Budur-Mustata-SaitoBS}), developing similar decompositions in mixed characteristics \cite[Proposition 5.5]{7author-1}, developing a multi-Rees algebra analog to prove various properties of multiplier ideals, etc.

\bibliographystyle{alpha}
\bibliography{ref}

\end{document}